\documentclass[12pt]{amsart}
\usepackage{amsfonts, amssymb, amsmath, amsthm, latexsym, array}
\usepackage{fullpage,color}
\usepackage{verbatim,euscript}

\tolerance=4000
\numberwithin{equation}{section}

\usepackage{color}
\usepackage[colorlinks=true,linkcolor=blue,urlcolor=violet,citecolor=magenta]{hyperref}

\input {cyracc.def}

\newtheorem{thm}{Theorem}[section]
\newtheorem{qtn}{Problem}
\newtheorem{conj}[thm]{Conjecture}
\newtheorem{lm}[thm]{Lemma}
\newtheorem{cl}[thm]{Corollary}
\newtheorem{prop}[thm]{Proposition}

\theoremstyle{remark}
\newtheorem{rmk}[thm]{Remark}
\newtheorem*{rem}{Remark}

\theoremstyle{definition}
\newtheorem{ex}[thm]{Example}
\newtheorem*{exa}{Example}


\newcommand {\me}{{\mathfrak m}}




\newcommand {\gc}{{\mathcal C}}
\newcommand {\cf}{{\mathcal F}}

\newcommand {\N}{{\mathcal N}}

\newcommand {\cs}{{\mathcal S}}


\newcommand {\BZ}{{\mathbb Z}}
\newcommand {\BN}{{\mathbb N}}

\newcommand {\BC}{{\mathbb C}}

\newcommand{\odin}{{\mathrm{1\hspace{0.5pt}\!\! I}}}
\newcommand{\beq}{\begin{equation}}
\newcommand{\eeq}{\end{equation}}


\newcommand{\lb}{\lambda}
\newcommand{\ap}{\alpha}
\newcommand{\vp}{\varphi}

\renewcommand{\le}{\leqslant}
\renewcommand{\ge}{\geqslant}

\newcommand{\eus}{\EuScript}

\newcommand {\per}{{\mathrm{per}}}

\newcommand {\tr}{{\mathrm{tr\,}}}

\newcommand {\ov}{\overline}
\newcommand {\un}{\underline}

\newfam\eusfam%

\begin{document}
\setlength{\parskip}{3pt plus 5pt minus 0pt}
\hfill { {\color{cyan}\scriptsize \today}}
\vskip1ex

\title[Fredman's reciprocity and invariants of abelian groups]
{Fredman's reciprocity, invariants of abelian groups, and the permanent of the Cayley table}
\author[D.~Panyushev]{Dmitri I.~Panyushev}
\address[]{Independent University of Moscow,
Bol'shoi Vlasevskii per. 11, 119002 Moscow, \ Russia}
\email{panyush@mccme.ru}
\keywords{Molien formula, Poincar\'e series, permanent, Ramanujan's sum}
\begin{abstract}
Let  $\eus R$ be the regular representation of a finite abelian group $G$ and let $\gc_n$
denote the cyclic group of order $n$.
For $G=\gc_n$,
we compute the Poincar\'e series of all $\gc_n$-isotypic components in
$\cs^{\cdot}\eus R\otimes \wedge^{\cdot}\eus R$
(the symmetric tensor exterior algebra of $\eus R$). 
From this we derive a general
reciprocity and some number-theoretic identities.
This generalises results of Fredman and Elashvili--Jibladze.
Then we consider the Cayley table, $\eus M_G$, of $G$ and some generalisations of it.
In particular, we prove that the number of formally different terms in the
permanent of $\eus M_G$ equals $(\cs^n \eus R)^G$, where $n$ is the order of $G$.
\end{abstract}
\maketitle

\section{Introduction}
\noindent
In the beginning of the 1970s, M.~Fredman \cite{fred} considered the 
problem of computing  the number of vectors 
$(\lb_0,\lb_1,\dots,\lb_{n-1})$ with non-negative integer components that satisfy
\beq  \label{eq:fred}
\lb_0+\dots +\lb_{n-1}=m \quad \text{ and } \quad \sum_{j=0}^{n-1} j\lb_j \equiv i \mod n .
\eeq
He denoted this number by $S(n,m,i)$.
Using generating functions, Fredman obtained an explicit formula for 
$S(n,m,i)$, which immediately showed that  
$S(n,m,i)=S(m,n,i)$. The latter is said to be {\it Fredman's reciprocity}.
Using a necklace interpretation, he also constructed a bijection  
between the vectors enumerated by $S(n,m,i)$
and those enumerated by $S(m,n,i)$. 
However, these results did not attract  attention and remained unnoticed.

Later, Elashvili and Jibladze \cite{elji-1,elji-2} (partly with Pataraia \cite{EJP})
rediscovered these  results using Invariant Theory.
Let $\gc_n\simeq \BZ/n\BZ$  be the cyclic group of order $n$ and $\eus R$ the space of its
regular representation over $\BC$.
Choose a basis $(v_0,v_1,\dots,v_{n-1})$ for $\eus R$ consisting of $\gc_n$-eigenvectors.
More precisely, if $\gamma\in\gc_n$ is a generator and  $\zeta=\sqrt[n]1$ a 
fixed primitive root of unity, then $\gamma{\cdot}v_i=\zeta^i v_i$. Write $\chi_i$ for the
linear character $\gc_n\to \BC^\times$ that takes $\gamma$ to $\zeta^i$. The
monomial $v_0^{\lb_1}\dots v_{n-1}^{\lb_{n-1}}$ has degree $m$ and weight $\chi_i$
if and only if $(\lb_0,\lb_1,\dots,\lb_{n-1})$ satisfies \eqref{eq:fred}.
Thus, $S(n,m,i)$ is the dimension of the space of $\gc_n$-semi-invariants of weight $\chi_i$
in the $m$th symmetric power $\cs^m \eus R$. 
This space can also be understood as
the $\gc_n$-isotypic component of type $\chi_i$ in $\cs^m \eus R$,
denoted  by $(\cs^m\eus R)_{\gc_n,\chi_i}$.
To stress the connection with cyclic groups, we will  write  
$a_i(\gc_n,m)$ in place of $S(n,m,i)$.
The celebrated Molien formula provides a closed expression for 
the generating function (Poincar\'e series)
\[
     \cf((\cs^{\cdot}\eus R)_{\gc_n,\chi_i};t)=\sum_{m=0}^\infty a_i(\gc_n,m)t^m , 
\]
where $(\cs^{\cdot}\eus R)_{\gc_n,\chi_i}=\bigoplus_{m\ge 0} (\cs^m\eus R)_{\gc_n,\chi_i}$
is the  $(\gc_n,\chi_i)$-isotypic component in $\cs^{\cdot}\eus R$.
Then extracting the coefficient of $t^m$ yields a 
formula for $a_i(\gc_n,m)$,  
see \eqref{f-ela-jib}.
It is worth stressing that 
Molien's formula is a very efficient tool that provides a uniform approach to various  
combinatorial problems and paves the way for further generalisations, see e.g. \cite{st79}.  

In this note, we elaborate on two topics.
{\sl First}, generalising results of Fredman and Elashvili-Jibladze, 
we compute the Poincar\'e series for each $\gc_n$-isotypic component in 
the bi-graded  
algebra $\cs^{\cdot}\eus R \otimes \wedge^{\cdot}\eus R$ and then
$\dim(\cs^{p}\eus R \otimes \wedge^{m}\eus R)_{\gc_n, \chi_i}$ for all $p,m,i$
(Theorem~\ref{thm:iso-comp}). From this we derive
a more general reciprocity, see \eqref{eq:gen-Hermite}.
As a by-product of these computations, we obtain some interesting identities, e.g.,
\[
     \exp(\frac{z}{1-z^2})=\prod_{d=1}^\infty (1+z^d)^{\vp(d)/d} ,
\] 
where $\vp$ is Euler's totient function.
In Section~\ref{sect:exterior},  several identities related to isotypic components in 
$\wedge^{\cdot}\eus R$ are given;
some of them are valid for an arbitrary finite abelian group $G$, see 
Theorem~\ref{thm:sum=0}.
{\sl Second}, in Section~\ref{sect:Keli-table}, 
we study some properties of the Cayley table, $\eus M_G$, of $G$. If 
$G=\{x_0,x_1,\dots,x_{n-1}\}$, then  $\eus M_G$ can be regarded
as $n$ by $n$  matrix with entries in $\BC[x_0,\dots,x_{n-1}]\simeq \cs^{\cdot}\eus R$. 
For $G=\gc_n$, \ $\eus M_G$ is nothing but a generic {\it circulant matrix}. 
The permanent of $\eus M_G$,  $\per(\eus M_G)$, is a sum of monomials in $x_i$'s 
of degree $n$. Using 
\cite{hall}, we prove that the number of different monomials 
occurring in this sum equals $\dim(\cs^n\eus R)^G$. 
Then we introduce the extended Cayley table, $\widetilde{\eus M}_G$ (which is a matrix of order $n+1$), and characterise the monomials occurring in $\per(\widetilde{\eus M}_G)$ 
(Theorem~\ref{thm:per-ext-keli}).
This characterisation implies that the number of different monomials in
$\per(\widetilde{\eus M}_G)$ equals $\dim(\cs^{n+1}\eus R)^G$. 
Both $\per(\eus M_G)$ and $\det(\eus M_G)$ belong to $\cs^n\eus R$, and we prove that $\per(\eus M_G)$ is $G$-invariant, whereas $\det(\eus M_G)$ is a semi-invariant whose 
weight is the sum of all elements of the dual group $\hat G$. The latter means that in many cases $\det(\eus M_G)$ is invariant, too. In Section~\ref{sect:questions}, 
we  discuss some open problems related to  $(\cs^{\cdot}\eus R)^G$ and
$\per(\eus M_G)$.

\noindent
\un{Notation:}  $\# (M)$ is the cardinality of a finite set $M$; 
\ $(n,m)$ is the greatest common divisor of $n,m\in \BN$; $G$ is always a finite group.

 {\small  {\bf Acknowledgements.}
A part of this work was done during my 
stay at the Centro di Ricerca Matematica Ennio De Giorgi (Pisa) in July 2009. 
I am grateful to this institution for the warm hospitality and support.}

\section{Preliminaries} 
\label{prelim}

\subsection{Ramanujan's sums}
Two  important number-theoretic functions are
{\it Euler's totient function\/} $\varphi$  and the {\it M\"obius function} $\mu$.
Recall that $\vp(n)$ is the number of all primitive roots of unity of order $n$. 
Given $i, n\in\BN$, $n\ge 1$, the {\it Ramanujan's sum}, 
$c_n(i)$,  is the sum of $i$-th powers of the primitive roots of unity of order $n$.
In particular, $c_n(0)=\vp(n)$.
There are two useful expressions for  Ramanujan's sums:
\[ 
c_n(i)= \sum_{d \vert (n,i)} \mu\left(\displaystyle \frac{n}{d}\right)d  ,  \qquad
c_n(i)= \frac{\vp(n)}{\vp\left(\displaystyle\frac{n}{(n,i)}\right)} 
{\cdot} \mu\left({\frac{n}{(n,i)}}\right),
\] 
see \cite[Theorems 271 \& 272]{ha-wr}.
These formulae also show that  $c_n(1)=\mu(n)$, $c_n(i)=c_n(n{-}i)$, and
$c_n(i)$ is always a rational  integer.  

\subsection{Molien's formula for the symmetric algebra} 
\label{subs:Molien} 
Let $G$ be a finite group and  $V$ a finite-dimensional $G$-module. The original
Molien formula computes the Poincar\'e series of the graded algebra of invariants  
$(\cs^{\cdot}V)^G=\bigoplus_{m\ge 0} (\cs^m V)^G$.
More generally, there is a similar 
formula for the Poincar\'e series of any $G$-isotypic component  in $\cs^{\cdot} V$. 
Let $\chi$ be an irreducible representation of $G$ and 
$(\cs^{\cdot} V)_{G,\chi}$  the isotypic component of type $\chi$  in 
$\cs^{\cdot} V$. By definition, the Poincar\'e series of  $(\cs^{\cdot} V)_{G,\chi}$ is the power series
$\cf((\cs^{\cdot} V)_{G,\chi};t):=\sum_{m\ge 0}\dim \bigl( (\cs^m V)_{G,\chi}\bigr) t^m$.
Then 
\[
\cf\bigl((\cs^{\cdot} V)_{G,\chi};t\bigr)=\frac{\deg(\chi)}{\#(G)}\sum_{\gamma\in G}
 \frac{\tr(\chi(\gamma^{-1}))}{\det_V(\odin-t\gamma )} ,
\]
see e.g. \cite[Thm.\,2.1]{st79}. Here $\odin$ is the identity matrix in $GL(V)$. (The algebra of invariants corresponds to the trivial one-dimen\-si\-onal representation, i.e., if $\deg(\chi)=1$ and $\chi(\gamma)=1$ for all $\gamma\in G$.)

Let $\eus R$ be the space of the regular representation of $G$. 
For the $G$-module $\eus R$, Molien's formula 
can be presented in a somewhat simpler form. 

\begin{prop}[\protect{\cite[V.1.8]{AF76}}]  \label{molien_reg}
Let $\vp_G(d)$ be the number of elements of order $d$ in $G$. Then
\[
\displaystyle \cf((\cs^{\cdot}\eus R)^{G};t)=\frac{1}{\#(G)}\sum_{d\ge 1} 
\frac{\vp_G(d)}{(1-t^d)^{(\# G)/d}} . 
\]
\end{prop}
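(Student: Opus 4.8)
The plan is to specialise the general Molien formula recalled above to the case $V=\eus R$ with $\chi$ the trivial character: then $\deg(\chi)=1$ and $\tr(\chi(\gamma^{-1}))=1$ for every $\gamma\in G$, so that
\[
\cf\bigl((\cs^{\cdot}\eus R)^{G};t\bigr)=\frac{1}{\#(G)}\sum_{\gamma\in G}\frac{1}{\det_{\eus R}(\odin-t\gamma)} .
\]
Everything then reduces to computing the single polynomial $\det_{\eus R}(\odin-t\gamma)$ for an arbitrary $\gamma\in G$ and checking that it depends only on the order of $\gamma$.

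Fix $\gamma\in G$ of order $d$, and put $H:=\langle\gamma\rangle\simeq\gc_d$. As an $H$-module, $\eus R=\BC[G]$ is a direct sum of $[G:H]=\#(G)/d$ copies of the regular representation of $H$ (the cosets of $H$ in $G$ being its $H$-orbits, each free). A generator of $\gc_d$ acts on its own regular representation with the $d$ distinct $d$-th roots of unity as eigenvalues, each of multiplicity one; hence $\gamma$ acts on $\eus R$ with eigenvalues $1,\zeta,\dots,\zeta^{d-1}$ (for $\zeta$ a fixed primitive $d$-th root of unity), each of multiplicity $\#(G)/d$. Combined with the elementary factorisation $\prod_{k=0}^{d-1}(1-\zeta^{k}t)=1-t^{d}$, this gives
\[
\det_{\eus R}(\odin-t\gamma)=\Bigl(\prod_{k=0}^{d-1}(1-\zeta^{k}t)\Bigr)^{\#(G)/d}=(1-t^{d})^{\#(G)/d} .
\]

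It remains to reorganise the sum over $G$ according to the order of its elements. By definition there are exactly $\vp_G(d)$ elements of order $d$ in $G$, and $\vp_G(d)=0$ unless $d\mid\#(G)$ (so the sum below is in fact finite); hence
\[
\cf\bigl((\cs^{\cdot}\eus R)^{G};t\bigr)=\frac{1}{\#(G)}\sum_{\gamma\in G}\frac{1}{\det_{\eus R}(\odin-t\gamma)}=\frac{1}{\#(G)}\sum_{d\ge 1}\frac{\vp_G(d)}{(1-t^{d})^{\#(G)/d}} ,
\]
which is the asserted identity. There is no real obstacle here: the only step that is not pure bookkeeping is the eigenvalue count for $\gamma$ on $\eus R$, and it is immediate once one recalls that $\eus R|_{\langle\gamma\rangle}$ is a multiple of the regular representation of $\langle\gamma\rangle$. (For $G$ abelian one can argue even more directly: $\eus R\simeq\bigoplus_{\psi\in\hat G}\psi$, with $\gamma$ acting on the line $\psi$ by the scalar $\psi(\gamma)$, and as $\psi$ runs through $\hat G$ the values $\psi(\gamma)$ hit each $d$-th root of unity exactly $\#(G)/d$ times.)
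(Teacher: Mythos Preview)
Your proof is correct and follows essentially the same route as the paper: the paper does not prove this proposition directly but establishes it as a special case of Lemma~\ref{lem:gen-formula}, whose proof likewise reduces to the coset decomposition of $\eus R$ under $\langle\gamma\rangle$ to show that $\det_{\eus R}(\odin-t\gamma)=(1-t^d)^{\#(G)/d}$ when $\gamma$ has order $d$. The only cosmetic difference is that the paper computes this determinant by writing $\gamma$ as a block-diagonal matrix of $d\times d$ cyclic permutation blocks and evaluating the determinant of each block directly, whereas you diagonalise and use $\prod_{k=0}^{d-1}(1-\zeta^k t)=1-t^d$; the content is identical.
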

\noindent
This can easily be extended to an arbitrary $\chi$. If $\mathsf{ord}(\gamma)$ is the order
of $\gamma\in G$, then 
\beq    \label{eq:molin-reg-chi}
 \cf((\cs^{\cdot}\eus R)_{G,\chi};t)=
\frac{\deg(\chi)}{\#(G)}\sum_{d\vert \# G} 
\frac{\sum_{\gamma:\,\mathsf{ord}(\gamma)=d}\tr(\chi(\gamma^{-1}))}{(1-t^d)^{(\# G)/d}} .
\eeq
In fact, we prove below  a more general formula (Lemma~\ref{lem:gen-formula}).

\subsection{Formulae of Fredman and Elashvili-Jibladze} 
\label{subs:el-ji} 
\noindent
Recall that $a_i(\gc_n,m)=\dim \cs^m(\eus R)_{\gc_n,\chi_i}$ or, equivalently,
it is the number of vectors satisfying \eqref{eq:fred}.
In particular,  $a_0(\gc_n,m)=\dim \cs^m(\eus R)^{\gc_n}$.
If the elements  of $\gc_n$ are regarded as the roots of unity of order $n$,
then $\chi_i$ is the character $\xi\mapsto \xi^i,\ \xi\in\gc_n$.
Here $\vp_{\gc_n}(d)$  is almost Euler's totient function.
That is, $\vp_{\gc_n}(d)=\vp(d)$, if $d\vert n$; and $\vp_{\gc_n}(d)=0$ otherwise.
Using \eqref{eq:molin-reg-chi} with $G=\gc_n$ and $\chi=\chi_i$, 
we see that $\deg(\chi_i)=1$ and 
$\sum_{\gamma:\,\mathsf{ord}(\gamma)=d}\chi_i(\gamma^{-1})=c_d(d-i)$.
Then extracting the coefficient
of $t^m$ yields a nice-looking formula
(Fredman \protect\cite{fred}, Elashvili-Jibladze \protect\cite{elji-2})
\begin{equation}  \label{f-ela-jib}
   a_i(\gc_n,m)=\frac{1}{n+m}\sum_{d\vert (n,m)}c_d(i)
   \genfrac{(}{)}{0pt}{}{n/d+m/d}{n/d} \ .
\end{equation}

\begin{rmk}
Both Fredman's approach, see \eqref{eq:fred}, and cyclic group interpretation presuppose
that $a_i(\gc_n,m)$ is defined for $n\ge 1$ and $m\ge 0$. But \eqref{f-ela-jib} shows that 
$a_i(\gc_n,m)$ is naturally defined for $(n,m)\in\BN^2$, $(n,m)\ne (0,0)$. 
\end{rmk}
\noindent
It follows from \eqref{f-ela-jib} that $a_i(\gc_n,m)=a_i(\gc_m,n)$.
In  \cite{elji-1,elji-2,EJP},
this equality is named the ``Hermite reciprocity''. As it  has no relation to Hermite and
was first proved by Fredman,
the term {\it Fredman's reciprocity\/} seems to be more appropriate.

From  \eqref{f-ela-jib}, one  can derive the equality 
\beq \label{rmk:log-elji}
  \sum_{(n,m)\in \BN^2,\,(n,m)\ne (0,0)} 
  a_i(\gc_n,m)x^ny^m=-\sum_{d=1}^\infty \frac{c_d(i)}{d}\log(1-x^d-y^d) \ .
\eeq
(Cf. \cite[Remark\,2]{elji-1}, \cite[Sect.\,4]{EJP}.)

\section{Symmetric tensor exterior algebra and Poincar\'e series}
\label{sect:symm-exterior}

\noindent
As above, let $V$ be a $G$-module.
We consider the Poincar\'e series of the $G$-isotypic components in 
$\cs^{\cdot} V\otimes \wedge^{\cdot}V$.
Let $(\cs^{\cdot} V\otimes \wedge^{\cdot}V )_{G,\chi}$ denote the isotypic component
corresponding to an irreducible representation $\chi$. It is a bi-graded vector  space and its 
Poincar\'e series  is the formal  power series
\[
   \cf\bigl((\cs^{\cdot} V\otimes \wedge^{\cdot}V )_{G,\chi}; s,t\bigr)=
   \sum_{p,m\ge 0} \dim(\cs^p V\otimes \wedge^m V)_{G,\chi}\,s^pt^m .
\]
(Clearly, it is a polynomial with respect to  $t$.) It is known that  
\[
 \cf\bigl((\cs^{\cdot} V\otimes \wedge^{\cdot}V )^{G}; s,t\bigr)=
 \frac{1}{\# G}\sum_{\gamma\in G} \frac{\det_V(\odin+t\gamma )}{\det_V(\odin-s\gamma )},
\]
see \cite[Theorem\,1.33]{alm82}. A similar argument provides the formula for an arbitrary 
$G$-iso\-ty\-pic component:
\beq    \label{eq:gen-iso-comp}
\cf\bigl((\cs^{\cdot} V\otimes \wedge^{\cdot}V )_{G,\chi}; s,t\bigr)=
 \frac{\deg(\chi)}{\# G}\sum_{\gamma\in G}\tr(\chi(\gamma^{-1})) \frac{\det_V(\odin+t\gamma )}{\det_V(\odin-s\gamma )}.
\eeq
For, in place of the Reynolds operator  
$\displaystyle\frac{1}{\#(G)}\sum_{\gamma\in G}\gamma$ \ (the projection to the subspace of $G$-invariants), one
should merely exploit the operator  $\displaystyle\frac{\deg(\chi)}{\#(G)}\sum_{\gamma\in G}\tr(\chi(\gamma^{-1}))\gamma$ \ (the projection to the isotypic component of type $\chi$).

\begin{lm}    \label{lem:gen-formula}
For the regular representation $\eus R$ of $G$, 
the right-hand side of \eqref{eq:gen-iso-comp} can be written as 
\[ 
 \frac{\deg(\chi)}{\# G}\sum_{d\ge 1} \left(
 \sum_{\gamma:\,\mathsf{ord}(\gamma)=d}\tr(\chi(\gamma^{-1})){\cdot}
 \left(\frac{1-(-t)^d)}{1-s^d}\right)^{(\# G)/d}
\right) .
\]
\end{lm}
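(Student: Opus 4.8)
The plan is to evaluate the two determinants $\det_{\eus R}(\odin+t\gamma)$ and $\det_{\eus R}(\odin-s\gamma)$ explicitly, using that the regular representation is a permutation representation of a very special kind. Fix $\gamma\in G$ with $\mathsf{ord}(\gamma)=d$ and identify $\eus R$ with the $\BC$-span of the basis $\{e_x : x\in G\}$, on which $\gamma$ acts by $e_x\mapsto e_{\gamma x}$ (the left regular representation; the right regular representation, or the two-sided one, would give the same determinants). Since the resulting action of the cyclic group $\langle\gamma\rangle$ on $G$ is \emph{free} --- indeed $\gamma^k x=x$ forces $\gamma^k=e$, hence $d\mid k$ --- every orbit has exactly $d$ elements, so $G$ splits into $(\# G)/d$ such orbits and, after reordering the basis accordingly, the matrix of $\gamma$ on $\eus R$ is block-diagonal with $(\# G)/d$ identical blocks, each equal to the $d\times d$ cyclic permutation matrix $C_d$.

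Next I would record the two elementary determinant evaluations for a single block. Writing $\odin$ for the identity matrix of the appropriate size, the eigenvalues of $C_d$ are the $d$-th roots of unity, so $\det(\odin-sC_d)=1-s^d$, and replacing $s$ by $-t$ gives $\det(\odin+tC_d)=1-(-t)^d$. Multiplying over the $(\# G)/d$ blocks yields
\[
\det_{\eus R}(\odin-s\gamma)=(1-s^d)^{(\# G)/d},\qquad
\det_{\eus R}(\odin+t\gamma)=\bigl(1-(-t)^d\bigr)^{(\# G)/d},
\]
and hence
\[
\frac{\det_{\eus R}(\odin+t\gamma)}{\det_{\eus R}(\odin-s\gamma)}
=\left(\frac{1-(-t)^d}{1-s^d}\right)^{(\# G)/d}.
\]

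Finally I would substitute this into the right-hand side of \eqref{eq:gen-iso-comp} and observe that the quotient of determinants depends on $\gamma$ only through $d=\mathsf{ord}(\gamma)$; partitioning the sum $\sum_{\gamma\in G}$ according to the value of $\mathsf{ord}(\gamma)$ then produces exactly the asserted expression, the outer sum being finite since by Lagrange's theorem only divisors $d$ of $\# G$ contribute. There is essentially no obstacle here beyond the one point that genuinely does the work, namely the freeness of the $\langle\gamma\rangle$-action on $G$, which is what forces every cycle of $\gamma$ on the basis to have length exactly $d$ rather than a proper divisor of $d$; the rest is bookkeeping, and one recovers Proposition~\ref{molien_reg} by taking $\chi$ trivial, $t=0$, and $s=t$.
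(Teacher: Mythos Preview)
Your proof is correct and follows essentially the same route as the paper's: both observe that the action of $\langle\gamma\rangle$ on $G$ decomposes $G$ into $(\#G)/d$ cycles of length $d$ (the paper phrases this in terms of cosets, you in terms of freeness), yielding a block-diagonal matrix with cyclic permutation blocks, and then compute $\det(\odin-sC_d)=1-s^d$. The only cosmetic difference is that you compute this determinant via the eigenvalues of $C_d$ whereas the paper writes out the matrix and states the result directly.
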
\begin{proof}
If $\gamma\in G$ is of order $d$, then  $\langle \gamma\rangle\simeq \gc_d$ and 
 each coset of $\langle \gamma\rangle$  in $G$ is a cycle of 
length $d$ with respect to the multiplication by  $\gamma$. 
Hence, in a suitable basis of $\eus R$, the matrix of $\gamma$
in $GL(\eus R)$
consists of $(\# G)/d$ diagonal \\blocks 
$   \begin{pmatrix} 
      0  & 1 & 0 & \dots   & 0 \\
      0  & 0 & 1 & \ddots & 0 \\
      0  & 0 &  \ddots & \ddots & 0 \\
      0  & 0 & \ddots  & \ddots & 1 \\   
      1  & 0 & 0 & \dots & 0    
 \end{pmatrix}$ 
of size $d$. Since 
$\det\begin{bmatrix} 
      1 & -s & 0 & \dots & 0 \\
      0  & 1 & -s & \ddots & 0 \\
       0 &  0&  \ddots & \ddots & 0 \\
      0  & 0 & \ddots  & \ddots & -s \\   
      -s & 0 & 0 & \dots & 1    
\end{bmatrix}=1-s^d$, 
we obtain  $\displaystyle \frac{\det_{\eus R}(\odin+t\gamma )}{\det_{\eus R}(\odin-s\gamma )}=
\left(\frac{1-(-t)^d)}{1-s^d}\right)^{(\# G)/d}$, which proves the lemma.
\end{proof}

Now, we apply this lemma to the regular representation of $\gc_n$.
Recall that the number  $\displaystyle \genfrac{(}{)}{0pt}{}{a+b+c}{a,\, b,\, c}$ is 
defined to be $\displaystyle \frac{(a+b+c)!}{a!\, b!\, c!}$.

\begin{thm}  \label{thm:iso-comp}
The Poincar\'e series of the $(\gc_n,\chi_i)$-isotypic component equals
\begin{multline}  \label{eq:cyclic-poinc}
\cf\bigl((\cs^{\cdot}\eus R \otimes \wedge^{\cdot}\eus R )_{\gc_n,\chi_i}; s,t\bigr)=
\frac{1}{n}\sum_{d\vert n} c_d(i)\frac{(1-(-t)^d)^{n/d}}{(1-s^d)^{n/d}}
\\  
=\frac{1}{n}\sum_{d\vert n} c_d(i)
\Bigl(\sum_{a=0}^{n/d} (-1)^{(d+1)a}\genfrac{(}{)}{0pt}{}{n/d}{a}t^{ad} \Bigr)
\Bigl( \sum_{b\ge 0} \genfrac{(}{)}{0pt}{}{(n/d)+b-1}{(n/d)-1}s^{bd}\Bigr) .
\end{multline}
Consequently, 
\begin{equation}  \label{dim-nonsym}
   \dim \bigl(\cs^p \eus R\otimes \wedge^m \eus R \bigr)_{\gc_n,\chi_i}=\frac{(-1)^m}{p+n}
  \sum_{d\vert n,p,m} (-1)^{m/d} c_d(i) \genfrac{(}{)}{0pt}{}{(n+p)/d}{m/d,\, p/d,\, (n{-}m)/d}  .
\end{equation}
\end{thm}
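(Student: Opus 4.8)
The plan is to specialise Lemma~\ref{lem:gen-formula} to $G=\gc_n$ and $\chi=\chi_i$, recognise the resulting "character sum over elements of a fixed order" as a Ramanujan sum, and then read off everything by the binomial theorem.

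First I would handle the character sum. In $\gc_n$ an element of order $d$ exists only when $d\mid n$, and under the identification of $\gc_n$ with the group of $n$-th roots of unity (a generator $\gamma$ being sent to the fixed primitive root $\zeta$) the elements of order $d$ are precisely the primitive $d$-th roots of unity. Since $\deg(\chi_i)=1$ and $\chi_i$ is the character $\xi\mapsto\xi^i$, inversion permutes the primitive $d$-th roots of unity, so
\[
\sum_{\gamma:\,\mathsf{ord}(\gamma)=d}\tr(\chi_i(\gamma^{-1}))=\sum_{\omega}\omega^{-i}=c_d(-i)=c_d(i),
\]
the sum running over all primitive $d$-th roots $\omega$ and using $c_d(j)=c_d(d-j)$; this is the $\wedge$-analogue of the computation recalled after \eqref{eq:molin-reg-chi}. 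Substituting into Lemma~\ref{lem:gen-formula} with $\#G=n$ gives the first equality in \eqref{eq:cyclic-poinc} at once.

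Next I would expand the two factors. The exterior factor is a polynomial: by the binomial theorem
\[
(1-(-t)^d)^{n/d}=\sum_{a=0}^{n/d}\binom{n/d}{a}\bigl(-(-t)^d\bigr)^a=\sum_{a=0}^{n/d}(-1)^{(d+1)a}\binom{n/d}{a}\,t^{ad},
\]
since $(-1)(-1)^d=(-1)^{d+1}$, while the symmetric factor is expanded by the negative binomial series $(1-s^d)^{-n/d}=\sum_{b\ge0}\binom{(n/d)+b-1}{(n/d)-1}s^{bd}$; multiplying and summing over $d\mid n$ yields the second equality in \eqref{eq:cyclic-poinc}. To get \eqref{dim-nonsym} I extract the coefficient of $s^pt^m$: in the $d$-th summand a monomial $s^pt^m$ can occur only when $d\mid m$ and $d\mid p$, and then with $a=m/d$, $b=p/d$, giving
\[
\dim\bigl(\cs^p\eus R\otimes\wedge^m\eus R\bigr)_{\gc_n,\chi_i}=\frac1n\sum_{d\mid(n,p,m)}(-1)^{(d+1)(m/d)}c_d(i)\binom{n/d}{m/d}\binom{(n/d)+(p/d)-1}{(n/d)-1}.
\]
Finally, writing $(-1)^{(d+1)(m/d)}=(-1)^{m+m/d}=(-1)^m(-1)^{m/d}$, pulling out $(-1)^m$, and setting $N=n/d$, $M=m/d$, $P=p/d$, one has the elementary factorial identity
\[
\frac1n\binom{N}{M}\binom{N+P-1}{N-1}=\frac{(N+P-1)!}{d\,M!\,P!\,(N-M)!}=\frac1{p+n}\cdot\frac{(N+P)!}{M!\,P!\,(N-M)!}=\frac1{p+n}\binom{(n+p)/d}{m/d,\,p/d,\,(n-m)/d},
\]
using $n=Nd$ and $p+n=(N+P)d$; this is exactly \eqref{dim-nonsym}.

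The argument is essentially a mechanical consequence of Lemma~\ref{lem:gen-formula}, so there is no deep obstacle; the points that genuinely need care are the sign $(-1)^{(d+1)a}$ produced by expanding $(1-(-t)^d)^{n/d}$ and its rewriting as $(-1)^m(-1)^{m/d}$ once $a=m/d$, and the above conversion of $\frac1n\binom{n/d}{m/d}\binom{(n/d)+(p/d)-1}{(n/d)-1}$ into the multinomial form with denominator $p+n$. One may also note for consistency that when $m>n$ both sides of \eqref{dim-nonsym} vanish: the left side because $\cs^p\eus R\otimes\wedge^m\eus R=0$, and the right side because the lower entry $(n-m)/d$ of the multinomial coefficient is then negative.
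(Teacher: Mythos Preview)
Your proof is correct and follows exactly the paper's approach: specialise Lemma~\ref{lem:gen-formula} to $G=\gc_n$, identify the character sum over elements of fixed order $d$ with the Ramanujan sum $c_d(i)$, and then expand via the binomial and negative binomial series. The paper's own proof is in fact much terser than yours---it merely records the identification $\sum_{\mathsf{ord}(\gamma)=d}\chi_i(\gamma^{-1})=c_d(n{-}i)=c_d(i)$ and leaves the coefficient extraction leading to \eqref{dim-nonsym} entirely to the reader---so your explicit handling of the sign $(-1)^{(d+1)a}=(-1)^{m}(-1)^{m/d}$ and of the factorial identity converting $\tfrac1n\binom{n/d}{m/d}\binom{(n+p)/d-1}{(n/d)-1}$ into the multinomial form with denominator $p+n$ simply supplies the details the paper omits.
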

\begin{proof}
This  is a straightforward consequence of Lemma~\ref{lem:gen-formula}.
If $G=\gc_n$, then $\deg(\chi_i)=1$ and $\sum_{\gamma:\,\mathsf{ord}(\gamma)=d}\chi_i(\gamma^{-1})=
c_d(n{-}i)=c_d(i)$, which proves \eqref{eq:cyclic-poinc}.

\noindent
We leave it to the reader to extract the coefficient of $t^m s^p$ in \eqref{eq:cyclic-poinc} 
and obtain \eqref{dim-nonsym}.
\end{proof}

Letting $n=q+m$ yields a more symmetric form of \eqref{dim-nonsym}:
\begin{equation}  \label{eq:dim-sym}
   \dim \bigl(\cs^p \eus R\otimes \wedge^m \eus R\bigr)_{\gc_{q+m},\chi_i}=\frac{(-1)^m}{p+q+m}
  \sum_{d\vert p,q,m} (-1)^{m/d} c_d(i) \genfrac{(}{)}{0pt}{}{(m+p+q)/d}{m/d,\, p/d,\, q/d}  .
\end{equation}
As the right-hand side is  symmetric with respect to $p$ and $q$, we get an equality 
for dimensions of isotypic components related to the regular representations
of two cyclic groups, $(\gc_{q+m},\eus R)$ and $(\gc_{p+m},\tilde{\eus R})$:
\begin{equation}  \label{eq:gen-Hermite}
\dim \bigl(\cs^p \eus R\otimes \wedge^m \eus R\bigr)_{\gc_{q+m},\chi_i}=
\dim \bigl(\cs^q \tilde{\eus R}\otimes \wedge^m \tilde{\eus R}\bigr)_{\gc_{p+m},\chi_i}.
\end{equation}
For $m=0$, this simplifies to Fredman's reciprocity \cite[(4)]{fred}.
It would be interesting to have a combinatorial interpretation of this symmetry in the
spirit of Fredman's approach.
 
\begin{rmk} 
(1) \ Letting $t=0$ in \eqref{eq:cyclic-poinc} or $m=0$ in \eqref{dim-nonsym}, we get known formulae for the isotypic components in the symmetric algebra of $\eus R$, see \cite{elji-1, elji-2}.
Letting $s=0$ in \eqref{eq:cyclic-poinc} or $n=0$ in \eqref{dim-nonsym}, we get 
interesting formulae for the isotypic components in the exterior algebra of $\eus R$, see 
the next section.

(2) \ If $d$ is always odd (e.g. at least one of $m,  p, q$ is odd), then
$(-1)^{m+\frac{m}{d}}=1$ and the right-hand side
of \eqref{eq:dim-sym} becomes totally symmetric with respect to $p,q,m$.
\end{rmk}

The following is  a generalisation of  \eqref{rmk:log-elji}:

\begin{prop}   \label{prop:log}
\[
  \sum_{(p,q,m)\in \BN^3,\,p{+}q{+}m\ge 1} \dim
  \bigl(\cs^p \eus R\otimes \wedge^m \eus R\bigr)_{\gc_{q+m},\chi_i}{\cdot} x^py^q z^m=-\sum_{d=1}^\infty \frac{c_d(i)}{d}\log(1-x^d-y^d+(-z)^d) \ .
\]
\end{prop}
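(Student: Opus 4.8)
The plan is to obtain the generating identity in Proposition~\ref{prop:log} by summing the dimension formula \eqref{eq:dim-sym} against the monomial $x^py^qz^m$ and recognising the result as a logarithm. First I would substitute the explicit expression
\[
\dim\bigl(\cs^p\eus R\otimes\wedge^m\eus R\bigr)_{\gc_{q+m},\chi_i}
=\frac{(-1)^m}{p+q+m}\sum_{d\mid p,q,m}(-1)^{m/d}c_d(i)\genfrac{(}{)}{0pt}{}{(m+p+q)/d}{m/d,\,p/d,\,q/d}
\]
into the triple sum over $(p,q,m)\in\BN^3$ with $p+q+m\ge 1$, and interchange the order of summation so that $d\ge 1$ becomes the outer index. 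Writing $p=d\alpha$, $q=d\beta$, $m=d\gamma$ with $(\alpha,\beta,\gamma)\in\BN^3$, $\alpha+\beta+\gamma\ge 1$, the inner sum becomes
\[
\sum_{d\ge 1}c_d(i)\sum_{\substack{\alpha,\beta,\gamma\ge 0\\ \alpha+\beta+\gamma\ge 1}}
\frac{(-1)^{d\gamma+\gamma}}{d(\alpha+\beta+\gamma)}\genfrac{(}{)}{0pt}{}{\alpha+\beta+\gamma}{\alpha,\,\beta,\,\gamma}x^{d\alpha}y^{d\beta}z^{d\gamma},
\]
where I used $\frac{(-1)^m}{p+q+m}=\frac{(-1)^{d\gamma}}{d(\alpha+\beta+\gamma)}$ and the factor $(-1)^{m/d}=(-1)^\gamma$. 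Collecting the signs, $(-1)^{d\gamma}(-1)^\gamma z^{d\gamma}=\bigl((-z)^d\bigr)^\gamma\cdot(-1)^{(d+1)\gamma}\cdot(-1)^{d\gamma}$; more cleanly, $(-1)^{d\gamma+\gamma}z^{d\gamma}=(-1)^{(d+1)\gamma}z^{d\gamma}$, and I would pair this with a variable $w_d:=$ (the appropriate $d$-th power) so that the trinomial in $x^d,y^d$ and this $z$-term matches a known series.

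The key computational input is the elementary multinomial identity
\[
\sum_{\substack{\alpha,\beta,\gamma\ge 0\\ \alpha+\beta+\gamma\ge 1}}\frac{1}{\alpha+\beta+\gamma}\genfrac{(}{)}{0pt}{}{\alpha+\beta+\gamma}{\alpha,\,\beta,\,\gamma}u^\alpha v^\beta w^\gamma=-\log(1-u-v-w),
\]
valid as a formal power series: indeed, grouping terms with $\alpha+\beta+\gamma=k\ge 1$ gives $\sum_{k\ge 1}\frac{(u+v+w)^k}{k}=-\log(1-u-v-w)$ by the multinomial theorem. Applying this with $u=x^d$, $v=y^d$, and $w$ chosen so that $w^\gamma=(-1)^{(d+1)\gamma}z^{d\gamma}$, i.e. $w=(-1)^{d+1}z^d=-(-z)^d$, turns the inner sum over $(\alpha,\beta,\gamma)$ into $-\log\bigl(1-x^d-y^d+(-z)^d\bigr)$. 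Hence the whole expression equals $\sum_{d\ge 1}\frac{c_d(i)}{d}\bigl(-\log(1-x^d-y^d+(-z)^d)\bigr)=-\sum_{d\ge 1}\frac{c_d(i)}{d}\log(1-x^d-y^d+(-z)^d)$, which is exactly the right-hand side of the proposition.

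The only real subtlety — and the step I would be most careful about — is the sign bookkeeping: one must check that $\frac{(-1)^m(-1)^{m/d}}{p+q+m}$ indeed produces the factor $\frac{(-1)^{(d+1)\gamma}}{d(\alpha+\beta+\gamma)}$ after the substitution $p=d\alpha,q=d\beta,m=d\gamma$, so that the $z$-contribution assembles into $-(-z)^d$ inside the trinomial rather than $+(-z)^d$ or $-z^d$; a clean way to verify this is to set $i=0$ and $m=0$ (so $c_d(0)=\vp(d)$) and confirm one recovers \eqref{rmk:log-elji}, and then to check the coefficient of $z^1$ separately against \eqref{eq:dim-sym} with $m=1$. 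A secondary point is the legitimacy of interchanging the summations and of the formal substitution $w\mapsto -(-z)^d$: this is harmless because for each fixed total degree $N$ in $x,y,z$ only finitely many $(d,\alpha,\beta,\gamma)$ contribute, so all manipulations take place in the ring of formal power series $\BC[[x,y,z]]$ and no convergence issue arises. Once these sign and formal-rearrangement points are settled, the proposition follows immediately from \eqref{eq:dim-sym} and the multinomial expansion of the logarithm.
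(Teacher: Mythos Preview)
Your proposal is correct and follows essentially the same route as the paper: substitute the dimension formula \eqref{eq:dim-sym}, swap the order of summation via $p=d\alpha$, $q=d\beta$, $m=d\gamma$, absorb the signs so that the $z$-variable becomes $-(-z)^d$, and then invoke the multinomial expansion $\sum_{k\ge 1}(u+v+w)^k/k=-\log(1-u-v-w)$. The paper's proof is slightly terser in the sign bookkeeping (it writes $(-1)^\gamma(-z)^{\gamma d}=(-(-z)^d)^\gamma$ directly), but the argument is the same.
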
\begin{proof}
By \eqref{eq:dim-sym}, the left-hand side equals
\[
\sum_{p+q+m\ge 1}\frac{(-1)^m}{p+q+m}\sum_{d\vert p,q,m} (-1)^{m/d}c_d(i) \genfrac{(}{)}{0pt}{}{(p{+}q{+}m)/d}{p/d,\, q/d,\, m/d}x^py^qz^m .
\]
Letting $p/d=\ap,\, q/d=\beta,\, m/d=\gamma$, we rewrite it as
\begin{multline*}
  \sum_{d=1}^\infty \frac{c_d(i)}{d}\sum_{\ap+\beta+\gamma\ge 1}
  \frac{(-1)^\gamma}{\ap+\beta+\gamma}\genfrac{(}{)}{0pt}{}{\ap+\beta+\gamma}{\ap,\, \beta,\, \gamma}x^{\ap d}y^{\beta d}(-z)^{\gamma d}=
  \\
  \sum_{d=1}^\infty \frac{c_d(i)}{d}\sum_{k\ge 1}\Bigl(\sum_{\ap+\beta+\gamma=k}
  \frac{1}{k}\genfrac{(}{)}{0pt}{}{k}{\ap,\, \beta,\, \gamma}(x^d)^\ap(y^d)^\beta (-(-z)^d)^\gamma\Bigr)=\\
  \sum_{d=1}^\infty \frac{c_d(i)}{d}\sum_{k\ge 1} \frac{(x^d+y^d-(-z)^d)^k}{k}=
  -\sum_{d=1}^\infty \frac{c_d(i)}{d}\log(1-x^d-y^d+(-z)^d).
\end{multline*}
\end{proof}
Specializing the equality of Proposition~\ref{prop:log}, we get some interesting identities.
\\
A) Taking $x=y=0$ forces that $p=q=0$ in the left-hand side, which leads to the equality
\[
   \sum_{m\ge 1}\dim (\wedge^m\eus R)_{\gc_m,\chi_i}z^m =-
   \sum_{d=1}^\infty \frac{c_d(i)}{d}\log(1+(-z)^d).
\]
For $i=0$,  we have $c_d(0)=\vp(d)$ and $\dim (\wedge^m\eus R)^{\gc_m}=\begin{cases}  
1, & m \text{ odd }; \\ 0, & m \text{ even}.
\end{cases}$ \\  
    \text{That is, } $\displaystyle\frac{z}{1-z^2}=-\sum_{d=1}^\infty \frac{\vp(d)}{d}\log(1+(-z)^d).$
Replacing $z$ with $-z$ and exponentiating, we finally obtain:
\[
     \exp\left(\frac{z}{1-z^2}\right)=\prod_{d\ge 1} (1+z^d)^{\vp(d)/d} .
\]

\noindent
B) \  Likewise, for $x=z=0$ (or just $x=0$ in \eqref{rmk:log-elji}), we get
\[
    -\sum_{d=1}^\infty \frac{c_d(i)}{d}\log(1-y^d)=\begin{cases} y/(1-y), & i=0 \\
    0, & i\ne 0 .
    \end{cases}
\]
In particular, 
\[
   \exp\left(\frac{-y}{1-y}\right)=\prod_{d\ge 1} (1-y^d)^{\vp(d)/d} .
\]
\section{On the exterior algebra of the regular representation}
\label{sect:exterior}

\noindent
In case of  the exterior algebra of a $G$-module, 
the Poincar\'e series of an isotypic component is actually a polynomial
in $t$, which can be evaluated for any  $t$. Here we gather some practical formulae for the regular representations and for cyclic  groups.

First, using \eqref{eq:gen-iso-comp} and Lemma~\ref{lem:gen-formula} with trivial $\chi$ and
$s=0$, we obtain
\[
    \cf((\wedge^{\cdot}\eus R)^G;t)=\frac{1}{\#(G)}\sum_{d\ge 1} \vp_G(d)(1-(-t)^d)^{\#(G)/d} .
\]
It follows that  $\cf((\wedge^{\cdot}\eus R)^G;t)$ always has the factor $1+t$ and
\beq    \label{eq:dim-ext-inv}
  \dim(\wedge^{\cdot}\eus R)^G=\frac{1}{\#(G)}\sum_{ d\text{ odd}} \vp_G(d)2^{\#(G)/d} . 
\eeq
Note that here $G$ is not necessarily abelian!
\begin{ex}
For $G=\eus S_3$, we have $\vp_G(1)=1$, $\vp_G(2)=3$, and $\vp_G(3)=2$.
Therefore,
\[
  \cf((\wedge^{\cdot}\eus R)^{\eus S_3};t)=
  \frac{1}{6}\bigl( (1+t)^6+3(1-t^2)^3+2(1+t^3)^2\bigr)=1+t+t^2+4t^3+4t^4+t^5 .
\]
\end{ex}
For $G=\gc_n$, there are  precise assertions for all $G$-isotypic components in 
$\wedge^{\cdot}\eus R$.
Using Theorem~\ref{thm:iso-comp} with $s=0$ and $p=0$, we obtain
\begin{gather}  \label{eq:poinc-ext}
    \cf((\wedge^{\cdot}\eus R)_{\gc_n,\chi_i}; t )=
\frac{1}{n}\sum_{d\vert n} c_d(i)(1-(-t)^d)^{n/d}, \\   \label{eq:dim-ext}
\dim \bigl((\wedge^m\eus R)_{\gc_n,\chi_i}\bigr)=\frac{(-1)^m}{n}
  \sum_{d\vert n,m} (-1)^{m/d} c_d(i) \genfrac{(}{)}{0pt}{}{n/d}{m/d}=: b_i(\gc_n,m) . 
\end{gather}
Again, it is convenient to replace $n$ with $q+m$ in \eqref{eq:dim-ext}. Then
\[
b_i(\gc_{q+m},m)=\dim \bigl((\wedge^m\eus R)_{\gc_{q+m},\chi_i}\bigr)=\frac{(-1)^m}{q+m}
  \sum_{d\vert q,m} (-1)^{m/d} c_d(i) \genfrac{(}{)}{0pt}{}{q/d+m/d}{m/d} . 
\]
From this we derive the following observation:
\begin{prop}
If $q$ or $m$ is odd, then $b_i(\gc_{q+m}, m)=a_i(\gc_{q}, m)$ and also
$b_i(\gc_{q+m}, m)=b_i(\gc_{q+m}, q)$.
\end{prop}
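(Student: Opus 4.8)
The plan is to read off both identities directly from the explicit formula
\[
b_i(\gc_{q+m},m)=\frac{(-1)^m}{q+m}\sum_{d\vert q,m} (-1)^{m/d}c_d(i)\genfrac{(}{)}{0pt}{}{q/d+m/d}{m/d}
\]
and the companion formula \eqref{f-ela-jib} for $a_i(\gc_q,m)$, namely
\[
a_i(\gc_q,m)=\frac{1}{q+m}\sum_{d\vert (q,m)}c_d(i)\genfrac{(}{)}{0pt}{}{q/d+m/d}{q/d} .
\]
Note first that $\binom{q/d+m/d}{m/d}=\binom{q/d+m/d}{q/d}$, so the two sums have the same summands up to the sign factor $(-1)^{m}(-1)^{m/d}$ that appears in $b_i$. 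Hence everything reduces to controlling that sign.

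For the first equality, suppose $q$ or $m$ is odd. The index $d$ in the sum runs over common divisors of $q$ and $m$, so every such $d$ divides the odd one of $q,m$, which forces $d$ to be odd; then $m/d\equiv m\pmod 2$, so $(-1)^{m}(-1)^{m/d}=(-1)^{2m}=1$. Therefore every sign factor collapses to $1$ and the expression for $b_i(\gc_{q+m},m)$ becomes term-by-term identical with that for $a_i(\gc_q,m)$. This yields $b_i(\gc_{q+m},m)=a_i(\gc_q,m)$.

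For the second equality, observe that under the same hypothesis the formula for $b_i(\gc_{q+m},m)$ simplifies to $\frac{1}{q+m}\sum_{d\vert q,m}c_d(i)\binom{q/d+m/d}{m/d}$, an expression that is manifestly symmetric in $q$ and $m$ (the set of common divisors is symmetric, and $\binom{q/d+m/d}{m/d}=\binom{q/d+m/d}{q/d}$). Since the hypothesis ``$q$ or $m$ is odd'' is itself symmetric in $q$ and $m$, swapping the roles of $q$ and $m$ gives $b_i(\gc_{q+m},q)$ computed by the same symmetric sum; hence $b_i(\gc_{q+m},m)=b_i(\gc_{q+m},q)$. (Equivalently, one can chain the first identity with Fredman's reciprocity $a_i(\gc_q,m)=a_i(\gc_m,q)$ and then apply the first identity again with $m$ and $q$ interchanged.)

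There is essentially no obstacle here: the only point requiring care is the parity bookkeeping, i.e. checking that a common divisor of $q$ and $m$ must be odd whenever one of $q,m$ is odd, so that the alternating signs in \eqref{eq:dim-ext} disappear. Once that is noted, both statements are immediate comparisons of the two closed formulas.
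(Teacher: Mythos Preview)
Your argument is correct and is exactly what the paper has in mind: the proposition is stated as an immediate ``observation'' derived from the displayed formula for $b_i(\gc_{q+m},m)$, and your parity check that every common divisor $d$ of $q$ and $m$ is odd (hence $(-1)^{m+m/d}=1$) is precisely the computation needed to collapse that formula onto \eqref{f-ela-jib}. The second equality then follows by the evident $q\leftrightarrow m$ symmetry, as you note.
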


\begin{ex}
$b_i(\gc_{2n-1},n-1)=a_i(\gc_{n-1},n)$, and it is the $(n-1)$-th Catalan number regardless
of $i$.
\end{ex}

\begin{rem}
If $n$ is odd, then $\wedge^n \eus R$ is the trivial $\gc_n$-module and therefore
$\wedge^m \eus R \simeq \wedge^{n-m} \eus R$ as $\gc_n$-modules.  
This ``explains'' the equality $b_i(\gc_{n}, m)=b_i(\gc_{n}, n-m)$ for $n$ odd.
\end{rem}

Substituting $t=1$ in \eqref{eq:poinc-ext} yields a nice formula for dimension of the whole isotypic component:
\begin{equation}  \label{dim-ext-comp}
   \dim \bigl((\wedge^{\cdot}\eus R)_{\gc_n,\chi_i}\bigr)=
   \frac{1}{n} \sum_{d\vert n,\ d\ \text{odd}}  c_d(i)\, 2^{n/d} .
\end{equation}
For $i=0$, this becomes a special case of \eqref{eq:dim-ext-inv}. 
There is a down-to-earth interpretation of \eqref{dim-ext-comp} that does not invoke  
Invariant Theory.  As in the introduction,
choose a basis $\{v_0,v_1,\dots,v_{n-1}\}$ for $\eus R$ such that $v_i$ has weight $\chi_i$.
Then 
\[
v_{j_1}\wedge\dots \wedge v_{j_m}\in (\wedge^m\eus R)_{\gc_n,\chi_i} \quad 
\Longleftrightarrow \quad j_1+\dots + j_m \equiv i \mod n .
\]
Consequently, $\dim(\wedge^{\cdot}\eus R)_{\gc_n,\chi_i}$ equals the number of subsets
$J\subset \{0,1,\dots,n{-}1\}$ such that $|J| \equiv i \mod n$.
(Here $|J|$ stands for the sum of elements of $J$.) Hence our invariant-theoretic 
approach proves the following purely combinatorial fact:
\[
   \#\{ J\subset \{0,1,\dots,n{-}1\} \mid |J| \equiv i \mod n \}  = \frac{1}{n} \sum_{d\vert n,\ d\ \text{odd}}  c_d(i)\, 2^{n/d} .
\]
For $i=0$, this is nothing but the number of subsets of $\gc_n$ summing to the neutral element  (in the additive notation).    
We provide a similar interpretation for any  abelian group.  

\begin{thm}   \label{thm:sum=0}
For an abelian group  $G$, let $\N_G$ denote the number of subsets $S$ of $G$
such that $|S|:=\sum_{\gamma\in S}\gamma=0\in G$. Then
$\displaystyle \N_G=\dim(\wedge^{\cdot}\eus R)^G=\frac{1}{\#(G)}\sum_{d\text{ \rm odd}} \vp_G(d)2^{\#(G)/d}$. 
\end{thm}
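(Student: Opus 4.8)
The second equality is precisely \eqref{eq:dim-ext-inv}, which was established above for an \emph{arbitrary} finite group, so the only new content is the identity $\N_G=\dim(\wedge^{\cdot}\eus R)^G$. The plan is to diagonalise the $G$-action on $\eus R$, read off a basis of $(\wedge^{\cdot}\eus R)^G$ indexed by suitable subsets of the dual group $\hat G$, and then transport the count back to $G$ by Pontryagin duality; this is just the coordinate-free version of the argument given above for $\gc_n$.

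First I would use that, since $G$ is abelian, the regular representation splits as $\eus R=\bigoplus_{\chi\in\hat G}\BC v_\chi$, where $v_\chi$ is a $G$-eigenvector of weight $\chi$, i.e.\ $\gamma{\cdot}v_\chi=\chi(\gamma)v_\chi$ for all $\gamma\in G$ (concretely, $v_\chi=\sum_{g\in G}\chi(g)^{-1}e_g$ in the group-element basis $\{e_g\}_{g\in G}$ of $\eus R$). Fixing a total order on $\hat G$, the exterior monomials $v_T:=\bigwedge_{\chi\in T}v_\chi$, as $T$ runs over all subsets of $\hat G$, form a basis of $\wedge^{\cdot}\eus R$, and each of them is again a $G$-eigenvector:
\[
  \gamma{\cdot}v_T=\Bigl(\prod_{\chi\in T}\chi(\gamma)\Bigr)v_T=\Bigl(\prod_{\chi\in T}\chi\Bigr)(\gamma)\,v_T .
\]
Hence $v_T$ is $G$-invariant exactly when $\prod_{\chi\in T}\chi$ is the trivial character of $G$, that is, when $\sum_{\chi\in T}\chi=0$ in $\hat G$ written additively (the empty subset, corresponding to $1\in\wedge^0\eus R$, being counted as usual). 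Summing over bi-degrees, $\dim(\wedge^{\cdot}\eus R)^G$ is therefore the number of zero-sum subsets of $\hat G$, i.e.\ $\N_{\hat G}$.

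To finish, I would invoke Pontryagin duality: $\hat G\simeq G$ as finite abelian groups, and $\N_H$ plainly depends only on the isomorphism class of $H$, so $\N_{\hat G}=\N_G$; combined with \eqref{eq:dim-ext-inv} this gives all three quantities. There is essentially no obstacle in this argument; the one point to be careful about is that the exterior-algebra basis is indexed by honest subsets (not multisets) of $\hat G$, so that $T\mapsto\sum_{\chi\in T}\chi$ reproduces the definition of $\N_G$ verbatim, while the sign ambiguity coming from the chosen order on $\hat G$ is harmless since it does not affect the line $\BC v_T$.
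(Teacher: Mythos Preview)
Your proof is correct and follows essentially the same route as the paper: diagonalise $\eus R$ into weight spaces indexed by $\hat G$, observe that an exterior monomial $v_T$ is $G$-invariant precisely when $\sum_{\chi\in T}\chi=\hat 0$, so $\dim(\wedge^{\cdot}\eus R)^G=\N_{\hat G}$, and then invoke $\hat G\simeq G$. The paper's proof is slightly terser but identical in substance.
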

\begin{proof}
In view of \eqref{eq:dim-ext-inv}, only the first equality requires a proof.
Let $(z_0,\dots,z_{n-1})$ be a basis for $\eus R$ consisting of $G$-eigenvectors,
$n=\#(G)$. Here the weight of $z_i$ is a linear character $\chi_i$ and
$\hat{G}=\{\chi_0,\chi_1,\dots,\chi_{n-1}\}$ is the dual group of $G$. 
One of the $\chi_i$'s is the neutral
element of $\hat G$, denoted by $\hat 0$ in the additive notation.
Then
\[
z_{j_1}\wedge\dots \wedge z_{j_m}\in (\wedge^m\eus R)^G \quad 
\Longleftrightarrow \quad \chi_{j_1}+\dots + \chi_{j_m}=\hat 0\in \hat G.    
\]
Thus, $\dim(\wedge^{\cdot}\eus R)^G$ equals the number of subsets of $\hat G$ summing to $\hat 0$.
However, the groups $\hat G$ and $G$ are (non-canonically) isomorphic, hence
$\N_G=\N_{\hat G}$ and we are done. 
\end{proof}

\section{On the permanent of the Cayley table of an abelian group}
\label{sect:Keli-table}

\noindent In this section, $G$ is an abelian  group, 
$G=\{x_0,x_1,\dots,x_{n-1}\}$. 
The {\it Cayley table\/} of $G$, denoted $\eus M_G=(m_{i,j})$, 
can be regarded as $n$ by $n$ matrix with entries in
the polynomial ring $\BC[x_0,x_1,\dots,x_{n-1}]\simeq \cs^{\cdot}\eus R$. 
To distinguish the addition in $\BC[x_0,x_1,\dots,x_{n-1}]$ and the group 
operation in $G$, the latter is denoted by 
`$\dotplus $'.
By definiton, $m_{i,j}=x_i\dotplus x_j$, $i,j=0,\dots, n-1$.  
Hence $\eus M_G$ is a symmetric matrix.
The permanent of $\eus M_G$, $\per(\eus M_G)$, is 
a homogeneous polynomial of degree $n$ in $x_i$'s, and
it does not depend on the ordering of  elements of $G$.
Let $p(G)$ denote the number of formally different monomials occurring in  $\per(\eus M_{G})$.

\begin{rmk}    
In place of the Cayley table, one can consider the matrix $\hat{\eus M}_G$ with entries
$\hat m_{i,j}=x_i\circleddash x_j$ (the difference in $G$). Clearly, $\hat{\eus M}_G$ 
is obtained from $\eus M_G$ by rearranging the columns only (or, the rows only), using the
permutation on $G$ that takes each element to its inverse. Therefore
$\per(\hat{\eus M}_G)=\per(\eus M_{G})$ and 
$\det(\hat{\eus M}_G)=\pm\det(\eus M_{G})$. Although $\hat{\eus M}_G$ is not 
symmetric in general, 
an advantage is that every entry on the main diagonal is the neutral elements of $G$.
\end{rmk}

\begin{ex}   \label{ex:circulant}
For $G=\gc_n$ and the natural ordering of its elements (i.e., $x_i$ corresponds to $i$), 
one obtains a generic {\it circulant
matrix} (the latter means that the rows 
are successive cyclic permutations of the first row). More precisely, 
$\eus M_{\gc_n}$  (resp. $\hat{\eus M}_{\gc_n}$)
is a circulant matrix in Hankel (resp. Toeplitz) form.
For instance,
$\eus M_{\gc_3}=\begin{pmatrix}  x_0 & x_1 & x_2 \\
x_1 & x_2 & x_0 \\x_2 & x_0 & x_1 \\
\end{pmatrix}$.  Here 
 $\per(\eus M_{\gc_3})=x_0^3+x_1^3+x_2^3+3x_0x_1x_2$. Therefore $p(\gc_3)=4$.
\end{ex}

The function $n\mapsto p(\gc_n)$ was  studied in \cite{bru-new}, where it was pointed out 
that the main result of 
Hall~\cite{hall} shows that  $p(\gc_n)$ equals the number of 
solutions to  
\[  
  \begin{cases}\lb_0+\dots +\lb_{n-1}=n,  
  \\ \displaystyle \sum_{j=0}^{n-1} j\lb_j \equiv 0 \mod n .\end{cases} 
\] 
That is, $p(\gc_n)=a_0(\gc_n,n)$ in our notation. 
Because results of \cite{hall} apply to arbitrary finite abelian groups, one can be interested in 
$p(G)$ in this more general setting.  Below, we give
an invariant-theoretic answer using that result of Hall.

Let $\eus S_n$ denote the symmetric group acting by permutations on
$\{0,1,\dots,n-1\}$. Accordingly, $\eus S_n$ permutes the elements of $G$ by the rule
$\pi(x_i):=x_{\pi(i)}$.
Recall that 
\[
\per(m_{i,j})=\sum_{\pi\in\eus S_n}\prod_{i=0}^{n-1}m_{i,\pi(i)} .
\]
For the matrix $\eus M_G$, 
\[
   \prod_{i=0}^{n-1}m_{i,\pi(i)}=\prod_{i=0}^{n-1}(x_i\dotplus x_{\pi(i)})=
   \prod_{i=0}^{n-1}x_i^{k_i(\pi)}=:\boldsymbol{x}(\pi)
\]
is a monomial in $x_i$'s of degree $n$. Note that different permutations may result in 
the same monomial. 
The following is essentially proved by M.~Hall.

\begin{thm}[\protect{\cite[n.\,3]{hall}}]    
\label{thm:hall-perm}
A monomial\/ $\me=\prod_{i=0}^{n-1}x_i^{k_i}$ is of the form $\boldsymbol{x}(\pi)$ for
some $\pi\in\eus S_n$ (i.e., occurs in $\per(\eus M_G)$) if and only if\/
$ \sum_i k_i=n$  \text{ and } 
$k_0x_0\dotplus \dots \dotplus k_{n-1}x_{n-1}= 0\in G$.
{\rm [Of course, here $k_i x_i$ stands for  $x_i\dotplus \dots\dotplus x_i$  ($k_i$ times). ]}
\end{thm}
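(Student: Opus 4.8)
### Proof proposal for Theorem~\ref{thm:hall-perm}

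The plan is to analyse a single term $\prod_{i=0}^{n-1} m_{i,\pi(i)}$ of the permanent and characterise which monomials arise. First I would fix $\pi\in\eus S_n$ and unwind the definition: since $m_{i,\pi(i)}=x_i\dotplus x_{\pi(i)}$, the product $\prod_i m_{i,\pi(i)}$ is a product of $n$ generators $x_j$, so it is a monomial $\prod_j x_j^{k_j(\pi)}$ of total degree exactly $n$; this gives the condition $\sum_j k_j=n$ for free. For the second condition, I would observe that $k_j(\pi)=\#\{i : \text{$j=i$ or $j=\pi(i)$, counted with multiplicity}\}$, i.e.\ $k_j(\pi)$ counts how many of the $2n$ ``slots'' $x_0,\dots,x_{n-1},x_{\pi(0)},\dots,x_{\pi(n-1)}$ equal $x_j$. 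Summing the corresponding group elements, $\sum_j k_j(\pi)\,x_j = \sum_{i}(x_i\dotplus x_{\pi(i)}) = \bigl(\sum_i x_i\bigr)\dotplus\bigl(\sum_i x_{\pi(i)}\bigr) = 2\sum_{\gamma\in G}\gamma$ in the additive notation, because $\pi$ merely permutes $G$. Now the standard fact that the sum of all elements of a finite abelian group is $0$ unless $G$ has a unique element of order $2$ (and even in the exceptional $2$-torsion case $2\sum_{\gamma\in G}\gamma=0$) shows $\sum_j k_j(\pi)\,x_j=0\in G$. This proves the ``only if'' direction.

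For the ``if'' direction, suppose $(k_0,\dots,k_{n-1})$ satisfies $\sum_j k_j=n$ and $\sum_j k_j x_j=0$ in $G$; I must produce a permutation $\pi$ with $k_j(\pi)=k_j$ for all $j$. The natural device is a bipartite graph (equivalently a $0/1$ matrix) encoding the desired permutation: I would look for an $n\times n$ permutation matrix $P=(p_{i,j})$ such that, writing the monomial condition, $k_j = \#\{i : p_{i,j}=1\} + \#\{i: (\text{$i$ is the row with }p_{i,\sigma^{-1}... )\}$ --- more cleanly, I encode $m_{i,\pi(i)}=x_i\dotplus x_{\pi(i)}$ and need the multiset $\{\pi(0),\dots,\pi(n-1)\}=\{0,\dots,n-1\}$ (automatic) while the multiset $\{0,\pi(0)\},\{1,\pi(1)\},\dots$ of unordered-with-multiplicity pairs contributes exponent vector $k$. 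Reformulated: set $\ell_j := k_j$ and note $\ell_j - [\text{contribution }1\text{ from slot }x_j] $ must equal the number of $i$ with $\pi(i)=j$, i.e.\ I need a permutation $\pi$ whose ``target-count'' vector is the constant vector $1$ — which every permutation has — so the real content is matching exponents, and I would instead invoke Hall's marriage theorem (or directly the Birkhoff–von Neumann / integral flow argument) exactly as in \cite[n.\,3]{hall}: build a bipartite graph on $G\sqcup G$ with an edge $x_i \to x_j$ whenever $x_i\dotplus x_j$ contributes, and show the degree constraints imposed by $(k_j)$ together with $\sum k_j=n$ and $\sum k_j x_j=0$ guarantee a perfect matching by verifying Hall's condition on every subset, the sum condition in $G$ being precisely what rules out the obstruction.

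The main obstacle I anticipate is the ``if'' direction: transforming the two scalar/group-theoretic conditions on $(k_j)$ into a verification of Hall's condition (or the feasibility of an integer transportation problem) on \emph{all} subsets $T\subseteq G$. The naive degree count handles $\sum_j k_j=n$, but showing no ``deficient'' set $T$ exists requires genuinely using $\sum_j k_j x_j=0$: if some $T$ violated Hall's condition, one should be able to extract from it a nontrivial subgroup or coset structure forcing $\sum_{j\in T'} k_j x_j \neq 0$ for the complementary data, a contradiction. Since the excerpt explicitly says ``essentially proved by M.~Hall'' and cites \cite[n.\,3]{hall}, I would present the easy direction in full as above and, for the converse, reduce to the combinatorial matching statement and quote Hall's argument, spelling out only the translation between the permanent-term language and the bipartite-matching language.
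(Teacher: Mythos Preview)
Your necessity argument reaches the right conclusion via the correct key identity
\[
\sum_j k_j(\pi)\,x_j \;=\; \sum_{i=0}^{n-1}\bigl(x_i\dotplus x_{\pi(i)}\bigr) \;=\; 2\sum_{\gamma\in G}\gamma \;=\; 0,
\]
and this matches the paper's treatment, which simply records that ``the necessity of the conditions is easy'' and cites Hall for the rest.

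However, your intermediate description of $k_j(\pi)$ is wrong, and this error propagates and derails your sufficiency sketch. You write that $k_j(\pi)$ ``counts how many of the $2n$ slots $x_0,\dots,x_{n-1},x_{\pi(0)},\dots,x_{\pi(n-1)}$ equal $x_j$''. That would give $\sum_j k_j(\pi)=2n$, contradicting your own earlier (correct) observation that the degree is $n$. The point is that each entry $m_{i,\pi(i)}=x_i\dotplus x_{\pi(i)}$ is a \emph{single} variable --- the group element $x_i\dotplus x_{\pi(i)}\in G$ --- not the polynomial product $x_i\cdot x_{\pi(i)}$. Thus $k_j(\pi)=\#\{\,i : x_i\dotplus x_{\pi(i)}=x_j\,\}$. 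Your displayed sum still comes out right because $\sum_j k_j(\pi)\,x_j$ equals $\sum_i(x_i\dotplus x_{\pi(i)})$ under the \emph{correct} definition; but the reasoning you give for it is not the reason it is true.

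This misreading is what makes your ``if'' paragraph collapse: the bipartite graph you try to build (with edges $x_i\to x_j$ carrying the pair $(i,\pi(i))$ and exponent contributions split between $i$ and $\pi(i)$) does not model the actual problem. The correct reformulation, which is what Hall proves, is: given a multiset $\{y_1,\dots,y_n\}$ of elements of $G$ (each $x_j$ repeated $k_j$ times) with $\sum_s y_s=0$, there exists a permutation $\pi\in\eus S_n$ such that $x_i\dotplus x_{\pi(i)}=y_{\tau(i)}$ for some bijection $\tau$. Equivalently, one seeks a system of distinct representatives for the sets $A_i:=\{\,j : x_i\dotplus x_j \in\{y_1,\dots,y_n\}\,\}$. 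This is the translation into marriage-theorem language; the zero-sum condition is exactly what guarantees Hall's condition, but verifying it is the nontrivial part and is the content of \cite[n.\,3]{hall}. The paper does not reproduce that argument either --- it just flags sufficiency as ``a non-trivial argument'' and cites Hall --- so once you correct the interpretation of $k_j(\pi)$ and state the right matching problem, citing Hall as you propose is in line with the paper.
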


\noindent
The necessity of the conditions is easy; a non-trivial argument is required for the sufficiency, 
i.e., for the existence of $\pi$.

\begin{thm}   \label{thm:number-in-perm}
$p(G)=\dim \cs^n(\eus R)^G$.
\end{thm}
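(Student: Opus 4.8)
The plan is to identify $p(G)$ with the dimension of the invariant space by comparing two descriptions of the same set of monomials. By definition, $p(G)$ is the number of formally distinct monomials $\boldsymbol{x}(\pi)$, $\pi\in\eus S_n$, appearing in $\per(\eus M_G)$. Theorem~\ref{thm:hall-perm} characterises exactly which degree-$n$ monomials $\me=\prod_i x_i^{k_i}$ arise this way: precisely those with $\sum_i k_i=n$ and $\sum_i k_i x_i = 0$ in $G$ (group operation). So $p(G)$ equals the number of tuples $(k_0,\dots,k_{n-1})\in\BN^n$ with $\sum k_i = n$ and $k_0 x_0\dotplus\cdots\dotplus k_{n-1}x_{n-1}=0\in G$.

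Next I would translate the right-hand side into the same combinatorial language. Fix a basis $(v_0,\dots,v_{n-1})$ of $\eus R$ consisting of $G$-eigenvectors, indexed so that $v_i$ has weight the character $\chi_i\in\hat G$ corresponding to $x_i$ under a chosen identification (this is exactly the set-up in the introduction and in the proof of Theorem~\ref{thm:sum=0}). A monomial basis of $\cs^n\eus R$ is given by the $\prod_i v_i^{k_i}$ with $\sum_i k_i = n$, and such a monomial lies in $(\cs^n\eus R)^G$ if and only if its weight $\prod_i \chi_i^{k_i}$ is the trivial character, i.e. $\sum_i k_i \chi_i = \hat 0$ in $\hat G$. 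Hence $\dim\cs^n(\eus R)^G$ equals the number of tuples $(k_0,\dots,k_{n-1})\in\BN^n$ with $\sum k_i = n$ and $\sum_i k_i \chi_i = \hat 0\in\hat G$.

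Finally I would observe that the enumeration problem for $G$ and the one for $\hat G$ are literally the same: in both cases we count $(k_0,\dots,k_{n-1})\in\BN^n$ with $\sum k_i=n$ subject to a vanishing condition $\sum_i k_i g_i = 0$ where $(g_0,\dots,g_{n-1})$ runs over all elements of the relevant group (written additively). Since $G$ is abelian, $G\cong\hat G$ (non-canonically), and any such isomorphism carries the list of elements of $G$ to the list of elements of $\hat G$, turning the condition $\sum k_i x_i=0\in G$ into $\sum k_i \chi_{\sigma(i)}=\hat 0\in\hat G$ for some permutation $\sigma$; relabelling the $k_i$'s accordingly gives a bijection between the two solution sets. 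Therefore $p(G)=\dim\cs^n(\eus R)^G$, completing the proof.

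I do not expect a genuine obstacle here, since the hard content — the sufficiency direction in Theorem~\ref{thm:hall-perm} — is quoted from Hall. The only point requiring a little care is the bookkeeping in the last step: making sure that the passage through the isomorphism $G\cong\hat G$ genuinely matches up the two constraint sets index-for-index (after a permutation of indices), so that the counts agree exactly and not merely up to some uncontrolled discrepancy. This is the same device already used to prove $\N_G=\N_{\hat G}$ in Theorem~\ref{thm:sum=0}, so it should go through verbatim.
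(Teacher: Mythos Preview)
Your proof is correct and follows essentially the same route as the paper: invoke Hall's Theorem~\ref{thm:hall-perm} to count the monomials in $\per(\eus M_G)$, describe $\dim(\cs^n\eus R)^G$ by counting $G$-invariant monomials in the eigenvector basis, and then match the two counts via the non-canonical isomorphism $G\simeq\hat G$. The only difference is that you spell out the relabelling step more carefully than the paper does.
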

\begin{proof}
Let $(z_0,\dots,z_{n-1})$ be a basis for $\eus R$ consisting of $G$-eigenvectors. Recall
that the weight of $z_i$ is $\chi_i$ and
$\hat{G}=\{\chi_0,\dots,\chi_{n-1}\}$ is the dual group. 
The monomial $z_0^{k_0}\dots z_{n-1}^{k_{n-1}}\in \cs^{\cdot}\eus R$ 
is a semi-invariant of $G$ of weight 
$k_0\chi_0\dotplus \dots \dotplus k_{n-1}\chi_{n-1}\in \hat G$.
It follows that 
\[
  \dim \cs^n(\eus R)^G=\{(k_0,\dots,k_{n-1}) \mid \sum_i k_i=n \ \ \&  \ \ 
  k_0\chi_0\dotplus \dots \dotplus k_{n-1}\chi_{n-1}=\hat 0\} .
\]
Modulo the passage from $G$ to $\hat G$, these conditions coincide with those of 
Theorem~\ref{thm:hall-perm}. Since $G\simeq \hat G$, we are done.
\end{proof}

Our next goal is to extend these results to a certain matrix of order $n+1$.  
We begin with two assertionts on $\per(\eus M_G)$, which are of independent interest.

\begin{prop}   \label{prop:nat-action}
There is a natural action $\ast: G\times \eus S_n\to \eus S_n$ such that, for  
$\gamma\in G$ and 
$\pi\in\eus S_n$,  $\text{\rm sign}(\gamma{\ast}\pi)=\text{\rm sign}(\pi)$ and
$\boldsymbol{x}(\gamma{\ast}\pi)=\boldsymbol{x}(\pi)$.
\end{prop}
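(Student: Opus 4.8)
My plan is to obtain the action $\ast$ from the (left) regular action of $G$ on itself. Identify $G$ with $\{0,1,\dots,n-1\}$ via $x_i\leftrightarrow i$, and for $\gamma\in G$ let $\tau_\gamma\in\eus S_n$ be the permutation determined by $x_{\tau_\gamma(i)}=\gamma\dotplus x_i$, $i=0,\dots,n-1$; this is just translation by $\gamma$, and $\gamma\mapsto\tau_\gamma$ is a homomorphism $G\to\eus S_n$ (Cayley's theorem). I then \emph{define}
\[
   \gamma\ast\pi:=\tau_\gamma\circ\pi\circ\tau_\gamma ,\qquad \gamma\in G,\ \pi\in\eus S_n .
\]
Note that this is \emph{not} conjugation: morally, $\ast$ records the symmetry of $\per(\eus M_G)$ coming from the fact that simultaneously relabelling the rows of $\eus M_G$ by $\tau_\gamma$ and the columns by $\tau_\gamma^{-1}$ leaves $\eus M_G$ entrywise unchanged, because $G$ is abelian and $x_{\tau_\gamma^{-1}(i)}\dotplus x_{\tau_\gamma(j)}=x_i\dotplus x_j$.

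First I would check that $\ast$ is a left action. We have $\tau_0=\mathrm{id}$, hence $0\ast\pi=\pi$; and since $\gamma\mapsto\tau_\gamma$ is a homomorphism and $G$ is abelian,
\[
  \gamma_1\ast(\gamma_2\ast\pi)=\tau_{\gamma_1}\tau_{\gamma_2}\,\pi\,\tau_{\gamma_2}\tau_{\gamma_1}
  =\tau_{\gamma_1\dotplus\gamma_2}\,\pi\,\tau_{\gamma_1\dotplus\gamma_2}=(\gamma_1\dotplus\gamma_2)\ast\pi .
\]
The sign assertion is then immediate: $\mathrm{sign}$ is a homomorphism and $\mathrm{sign}(\tau_\gamma)^2=1$, so $\mathrm{sign}(\gamma\ast\pi)=\mathrm{sign}(\tau_\gamma)^2\,\mathrm{sign}(\pi)=\mathrm{sign}(\pi)$. (This is exactly why $\tau_\gamma$ must occur on \emph{both} sides with the same exponent.)

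It remains to show $\boldsymbol{x}(\gamma\ast\pi)=\boldsymbol{x}(\pi)$, which is a short reindexing. Since $x_{(\tau_\gamma\pi\tau_\gamma)(i)}=\gamma\dotplus x_{\pi(\tau_\gamma(i))}$, we have
\[
  \boldsymbol{x}(\gamma\ast\pi)=\prod_{i=0}^{n-1}\bigl(x_i\dotplus\gamma\dotplus x_{\pi(\tau_\gamma(i))}\bigr) .
\]
Substituting $j=\tau_\gamma(i)$ (a bijection of $\{0,\dots,n-1\}$) and using $x_i\dotplus\gamma=\gamma\dotplus x_i=x_{\tau_\gamma(i)}=x_j$ (here commutativity of $G$ enters), the $i$-th factor becomes $x_j\dotplus x_{\pi(j)}=m_{j,\pi(j)}$, whence $\boldsymbol{x}(\gamma\ast\pi)=\prod_j m_{j,\pi(j)}=\boldsymbol{x}(\pi)$. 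I do not anticipate any genuine obstacle; the only point requiring care is the shape of $\ast$ — e.g.\ ordinary conjugation $\pi\mapsto\tau_\gamma\pi\tau_\gamma^{-1}$ would shift $\boldsymbol{x}(\pi)$ by $2\gamma$ rather than fix it, so one really needs $\tau_\gamma$ (or, equally well, $\tau_\gamma^{-1}$) on both sides.
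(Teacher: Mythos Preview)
Your proof is correct and essentially identical to the paper's: your translation $\tau_\gamma$ coincides with the paper's $\sigma_\gamma$ (defined by $x_{\sigma_\gamma(i)}=x_i\dotplus\gamma$, which is the same map since $G$ is abelian), the action $\gamma\ast\pi=\sigma_\gamma\pi\sigma_\gamma$ is defined verbatim, and the verification of $\boldsymbol{x}(\gamma\ast\pi)=\boldsymbol{x}(\pi)$ is the same reindexing via $j=\sigma_\gamma(i)$. You add the explicit check that $\ast$ is a left action and the cautionary remark about ordinary conjugation, both of which are fine supplements that the paper omits.
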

\begin{proof}
Every $\gamma\in G$ determines a permutation  $\sigma_\gamma$ 
on $G$ and thereby an element of $\eus S_n$. Namely:
\[
   (x_0,\dots,x_{n-1})  \ \stackrel{\sigma_\gamma}{\mapsto} \ (\gamma\dotplus x_0,\dots,
   \gamma\dotplus x_{n-1}) .
\]
Equivalently,   $x_{\sigma_\gamma(i)}=x_i\dotplus \gamma$.
Define the $G$-action on $\eus S_n$ by   
$\gamma{\ast}\pi=\sigma_\gamma \pi \sigma_\gamma$.  Hence $\text{\rm sign}(\gamma\ast\pi)=\text{\rm sign}(\pi)$. 
Recall that $\boldsymbol{x}(\pi)=\prod_{i=0}^{n-1}(x_i\dotplus x_{\pi(i)})$.
Then
\[
  \boldsymbol{x}(\gamma{\ast}\pi)=\prod_{i=0}^{n-1}(x_i\dotplus 
  x_{\sigma_\gamma\pi\sigma_\gamma(i)})=\prod_{j=0}^{n-1}(x_{\sigma_\gamma^{-1}(j)}\dotplus x_{\sigma_\gamma\pi(j)}) ,
\]
where $j=\sigma_\gamma(i)$.  By definition, 
$x_{\sigma_\gamma\pi(j)}=x_{\pi(j)}\dotplus \gamma$ and $x_j=x_{\sigma_\gamma^{-1}(j)}
\dotplus \gamma$. Thus, the linear  factors of $\boldsymbol{x}(\gamma{\ast}\pi)$ 
remain the same.
\end{proof}

\begin{rmk}
Our action `$\ast$' can be regarded as a generalisation of Lehmer's ``operator $S$'' for circulant matrices~\cite[p.\,45]{lehm}, i.e., essentially, for $G=\gc_n$. Using that operator Lehmer proved  that, for $n=p$ odd prime,
\[
   \det(\eus M_{\gc_p})=x_0^p+\dots +x_{p-1}^p+ p F(x_0,\dots,x_{p-1}),
\]
where $F\in \BZ[x_0,\dots,x_{p-1}]$. We note that Lehmer's argument applies to 
$\per(\eus M_{\gc_p})$ as well. 
\end{rmk}

\begin{prop}    \label{prop:sdvig}
Suppose that $\me$ is a monomial in $\per(\eus M_G)$ such that  $x_k$ occurs in $\me$. If 
$x_k=x_i\dotplus x_j$ for some $i,j$, then there is  $\sigma \in\eus S_n$ such that
$\sigma(i)=j$ and $\me=\boldsymbol{x}(\sigma)$.
\end{prop}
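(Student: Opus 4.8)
The plan is to prove Proposition~\ref{prop:sdvig} by starting from an arbitrary permutation $\pi$ that realises $\me$, i.e.\ with $\boldsymbol{x}(\pi)=\me$, and then using the $G$-action `$\ast$' of Proposition~\ref{prop:nat-action} to move the linear factor containing $x_k$ into the ``$i$-th slot'', so that the resulting permutation sends $i$ to $j$. Since $x_k$ occurs in $\me$, some index $a\in\{0,\dots,n-1\}$ satisfies $x_a\dotplus x_{\pi(a)}=x_k$; write $x_k=x_i\dotplus x_j$ as in the hypothesis. First I would compute, in $G$, the element $\gamma:=x_i\circleddash x_a$ (equivalently $x_a\dotplus\gamma=x_i$), and consider $\sigma_\gamma$ as in the proof of Proposition~\ref{prop:nat-action}, so that $x_{\sigma_\gamma(a)}=x_a\dotplus\gamma=x_i$, i.e.\ $\sigma_\gamma(a)=i$.

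Next I would set $\sigma:=\gamma\ast\pi=\sigma_\gamma\,\pi\,\sigma_\gamma$. By Proposition~\ref{prop:nat-action}, $\boldsymbol{x}(\sigma)=\boldsymbol{x}(\pi)=\me$, so $\sigma$ realises $\me$. It remains to check that $\sigma(i)=j$. Using $\sigma_\gamma(a)=i$ and the relation $x_{\sigma_\gamma(t)}=x_t\dotplus\gamma$ for all $t$, one has $\sigma(i)=\sigma_\gamma\pi\sigma_\gamma(i)$; here $\sigma_\gamma(i)=\sigma_\gamma^2(a)$, and one must track the value of the linear factor. The cleaner route is to look directly at the factor: the factor of $\boldsymbol{x}(\sigma)$ indexed by $i$ is $x_i\dotplus x_{\sigma(i)}$, and the computation in the proof of Proposition~\ref{prop:nat-action} shows this factor equals the factor of $\boldsymbol{x}(\pi)$ indexed by $j'=\sigma_\gamma^{-1}(i)=\sigma_\gamma(a)$... wait, one has to be careful with whether $\sigma_\gamma$ is an involution; in general it need not be, so instead I would use $j'=\sigma_\gamma^{-1}(i)=a$ (which holds since $\sigma_\gamma(a)=i$). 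That proof shows the factor indexed by $i$ in $\boldsymbol{x}(\gamma\ast\pi)$ coincides with the factor indexed by $a$ in $\boldsymbol{x}(\pi)$, namely $x_a\dotplus x_{\pi(a)}=x_k=x_i\dotplus x_j$. Hence $x_i\dotplus x_{\sigma(i)}=x_i\dotplus x_j$, and cancelling $x_i$ in the group $G$ gives $x_{\sigma(i)}=x_j$, i.e.\ $\sigma(i)=j$, as desired.

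The main obstacle is purely bookkeeping: keeping straight which index goes where under the double conjugation $\sigma_\gamma\pi\sigma_\gamma$ and not accidentally assuming $\sigma_\gamma^2=\mathrm{id}$ (true for $G$ of exponent $2$, false in general). The key identity to lean on is $x_{\sigma_\gamma(t)}=x_t\dotplus\gamma$, used both forwards and backwards, exactly as in the proof of Proposition~\ref{prop:nat-action}; everything else is the observation that $G$ is a group so one may solve $x_a\dotplus\gamma=x_i$ for $\gamma$ and cancel $x_i$ at the end. No new ingredient beyond Propositions~\ref{prop:nat-action} is needed, so the argument is short once the indices are pinned down.
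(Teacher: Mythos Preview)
Your approach is exactly the paper's---start from any $\pi$ with $\boldsymbol{x}(\pi)=\me$, pick a factor $x_a\dotplus x_{\pi(a)}=x_k$, and act by a suitable $\gamma\in G$ via Proposition~\ref{prop:nat-action}---but you have the direction of $\gamma$ inverted, and this is precisely the bookkeeping trap you warned yourself about. The computation in the proof of Proposition~\ref{prop:nat-action} shows that the factor of $\boldsymbol{x}(\gamma\ast\pi)$ indexed by $i$ equals the factor of $\boldsymbol{x}(\pi)$ indexed by $\sigma_\gamma(i)$, \emph{not} $\sigma_\gamma^{-1}(i)$: with $j=\sigma_\gamma(i)$ one has $x_i\dotplus x_{\sigma_\gamma\pi\sigma_\gamma(i)}=x_{\sigma_\gamma^{-1}(j)}\dotplus x_{\sigma_\gamma\pi(j)}=(x_j\circleddash\gamma)\dotplus(x_{\pi(j)}\dotplus\gamma)=x_j\dotplus x_{\pi(j)}$. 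So to make the $i$-th factor of $\boldsymbol{x}(\sigma)$ match the $a$-th factor of $\boldsymbol{x}(\pi)$ you need $\sigma_\gamma(i)=a$, i.e.\ $x_i\dotplus\gamma=x_a$, i.e.\ $\gamma=x_a\circleddash x_i$---the inverse of your choice $x_i\circleddash x_a$.

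With the corrected $\gamma$ the paper dispenses with the factor-matching argument altogether and just computes directly: $\sigma(i)=\sigma_\gamma\pi\sigma_\gamma(i)=\sigma_\gamma\pi(a)$, and since $x_{\sigma_\gamma(\pi(a))}=x_{\pi(a)}\dotplus\gamma=x_{\pi(a)}\dotplus x_a\circleddash x_i=x_k\circleddash x_i=x_j$, one gets $\sigma(i)=j$ immediately. Once you flip $\gamma$, your argument and the paper's are identical.
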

\begin{proof}
By the assumption on $\me$, there is a $\pi\in\eus S_n$ such that 
$\me=\boldsymbol{x}(\pi)$ and $x_k=x_\ap\dotplus x_\beta$  for some $\ap,\beta$ with $\pi(\ap)=\beta$. If $\{\ap,\beta\}\ne \{ i,j\}$, then we have to correct $\pi$.  Take $\gamma\in G$
such that $x_i\dotplus \gamma =x_\ap$. Then $x_\beta\dotplus \gamma=x_j$
 and for $\sigma=\gamma{\ast}\pi$ we have
\[
  \sigma(x_i)= \sigma_\gamma\pi\sigma_\gamma(x_i)= \sigma_\gamma\pi(x_\ap)= \sigma_\gamma(x_\beta)
   =x_\beta\dotplus \gamma=x_j .   
\]
Thus, $\sigma(i)=j$ and also $\boldsymbol{x}(\sigma)=\boldsymbol{x}(\pi)$ in view of Proposition~\ref{prop:nat-action}.
\end{proof}

\noindent
The  Cayley table of $G$ is  the ``addition table'' of all elements of $G$.
Define the {\it extended Cayley table} as an $n{+}1$ by $n{+}1$ matrix
that is the ``addition table'' of $n{+}1$ elements of $G$, with the 
neutral element  taken twice. More precisely, we assume that $x_0=x_n=0$ is the
neutral element of $G$ and consider the matrix $\widetilde{\eus M}_G=(m_{i,j})$, where
$m_{i,j}=x_i\dotplus x_j$, $i,j=0,1,\dots,n$. In this context,  $\eus S_{n+1}$ is regarded
as  permutation group on $\{0,1,\dots,n\}$. Then
$\per(\widetilde{\eus M}_G)=\sum_{\tilde\pi\in \eus S_{n+1}} \boldsymbol{x}(\tilde\pi)$ 
is a sum of monomials of degree $n+1$.

\begin{exa}
$\widetilde{\eus M}_{\gc_3}=\begin{pmatrix}  x_0 & x_1 & x_2 & x_0\\
x_1 & x_2 & x_0 & x_1 \\x_2 & x_0 & x_1 & x_2\\ x_0 & x_1 & x_2 & x_0\\
\end{pmatrix}$, $\per(\widetilde{\eus M}_{\gc_3})=2x_0^4+10x_0^2x_1x_2+4x_0x_1^3+4x_0x_2^3+4x_1^2x_2^2$.
\end{exa}

\begin{thm}   \label{thm:per-ext-keli}
The monomial\/ $\me=\prod_{i=0}^{n-1}x_i^{k_i}$ 
occurs in\/ $\per(\widetilde{\eus M}_G)$ if and only if
\[ \sum_i k_i=n+1 \ \text{ and } \ 
k_0x_0\dotplus \dots \dotplus k_{n-1}x_{n-1}= 0\in G .
\]
\end{thm}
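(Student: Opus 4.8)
The plan is to prove the two implications separately, reducing sufficiency to the corresponding facts already available for the ordinary Cayley table: Theorem~\ref{thm:hall-perm}, which says every monomial of degree $n$ naming $0\in G$ occurs in $\per(\eus M_G)$, and Proposition~\ref{prop:sdvig}, which says one may in addition prescribe one value of the realizing permutation. Throughout I use the normalization $x_0=x_n=0$, regard $\eus S_{n+1}$ as acting on $\{0,1,\dots,n\}$, and view $\eus S_n$ as the stabilizer of $n$ in $\eus S_{n+1}$, acting on $\{0,1,\dots,n-1\}$. The mechanism is always the same: I build $\tilde\pi\in\eus S_{n+1}$ from a permutation governed by $\eus M_G$, letting the extra row and column of $\widetilde{\eus M}_G$ contribute a controlled pair of linear factors.

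For necessity, assume $\me=\boldsymbol{x}(\tilde\pi)=\prod_{i=0}^n(x_i\dotplus x_{\tilde\pi(i)})$. It is a product of $n+1$ variables, so $\sum_i k_i=n+1$; and replacing each variable in $\boldsymbol{x}(\tilde\pi)$ by the element of $G$ that it names, one finds $k_0x_0\dotplus\dots\dotplus k_{n-1}x_{n-1}=\sum_{i=0}^n(x_i\dotplus x_{\tilde\pi(i)})=2\sum_{i=0}^n x_i=2\sum_{\gamma\in G}\gamma=\sum_{\gamma\in G}(\gamma\circleddash\gamma)=0$, using that $\tilde\pi$ permutes $\{0,\dots,n\}$, that $x_n=0$, and that $\gamma\mapsto\circleddash\gamma$ is a bijection of $G$. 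For the converse when $k_0\ge 1$, I would take $\me'=\me/x_0$; it has degree $n$ and still names $0\in G$, so Theorem~\ref{thm:hall-perm} gives $\pi\in\eus S_n$ with $\boldsymbol{x}(\pi)=\me'$, and extending by $\tilde\pi(n)=n$ yields $\boldsymbol{x}(\tilde\pi)=(x_n\dotplus x_n)\boldsymbol{x}(\pi)=x_0\me'=\me$.

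The case $k_0=0$ is the one I expect to be the main obstacle, since then no single variable can be deleted from $\me$ while preserving the group-value $0$. Here I would first use $\sum_{i\ge 1}k_i=n+1>n-1$ to pick $\ell\in\{1,\dots,n-1\}$ with $k_\ell\ge 2$, and then trade the two factors $x_\ell x_\ell$ for the single variable $x_\ell\dotplus x_\ell$: the monomial $\me''=\me\,(x_\ell\dotplus x_\ell)/x_\ell^2$ has non-negative exponents (as $x_\ell\ne 0$, so $x_\ell\dotplus x_\ell\ne x_\ell$), has degree $n$, and still names $0\in G$ (net change $\circleddash 2x_\ell\dotplus 2x_\ell=0$). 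Thus $\me''$ occurs in $\per(\eus M_G)$ and the variable $x_\ell\dotplus x_\ell$ occurs in it, so Proposition~\ref{prop:sdvig}, applied with $i=j=\ell$, supplies $\sigma\in\eus S_n$ with $\sigma(\ell)=\ell$ and $\boldsymbol{x}(\sigma)=\me''$. I would then re-route $\ell$ through the extra index $n$: set $\tilde\pi(\ell)=n$, $\tilde\pi(n)=\ell$, and $\tilde\pi(i)=\sigma(i)$ otherwise. This is a bijection of $\{0,\dots,n\}$; its factors agree with those of $\boldsymbol{x}(\sigma)$ outside rows $\ell$ and $n$, while rows $\ell$ and $n$ now give $x_\ell\dotplus x_n=x_\ell$ and $x_n\dotplus x_\ell=x_\ell$ in place of the single factor $x_\ell\dotplus x_\ell$ of $\boldsymbol{x}(\sigma)$, so $\boldsymbol{x}(\tilde\pi)=\me''\,x_\ell^2/(x_\ell\dotplus x_\ell)=\me$. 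The only real bookkeeping is checking that the extended maps are permutations and that exponents stay non-negative; the substance is entirely carried by Theorem~\ref{thm:hall-perm} and Proposition~\ref{prop:sdvig}.
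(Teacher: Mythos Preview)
Your proof is correct and follows essentially the same strategy as the paper's: necessity is immediate, the case $k_0\ge 1$ is reduced to Theorem~\ref{thm:hall-perm} by stripping off one $x_0$, and the case $k_0=0$ is handled via Proposition~\ref{prop:sdvig} to pin down one value of the realizing permutation before re-routing through the extra index~$n$. The only minor difference is that in the $k_0=0$ case the paper replaces an arbitrary pair $x_ix_j$ by $(x_i\dotplus x_j)x_0$ (thereby passing through the $k_0>0$ case and then composing with the transposition $(i\ n)$), whereas you specialize to a repeated factor $x_\ell^2$ found by pigeonhole and build $\tilde\pi$ directly by swapping $\ell\leftrightarrow n$; both arrive at the same place.
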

\begin{proof}
"$\Rightarrow$".  Suppose $\me=\boldsymbol{x}(\tilde\pi)$ for some $\tilde\pi\in\eus S_{n+1}$.
Obviously, $\deg\me=n+1$.  Next, 
\[
  k_0x_0\dotplus \dots \dotplus k_{n-1}x_{n-1}=(x_0\dotplus x_{\tilde\pi(0)})\dotplus 
(x_1\dotplus x_{\tilde\pi(1)})\dotplus  \dots \dotplus (x_n\dotplus x_{\tilde\pi(n)})=0,
\] 
since the multiset $\{x_0,x_1,\dots,x_{n-1},x_n=x_0\}$ is closed with respect to taking
inverses. 

"$\Leftarrow$".   Suppose $\me$ satisfies the conditions of the theorem.

\textbullet \ \ $k_0>0$.   Take $\me'=x_0^{k_0-1}x_1^{k_1}\dots x_{n-1}^{k_{n-1}}$. Then $\me'$ satisfies the conditions of Theorem~\ref{thm:hall-perm}. Therefore $\me'$ is a monomial of
$\per(\eus M_G)$ and there is a  $\pi\in \eus S_n$
such that $\me'=\boldsymbol{x}(\pi)$.
Embed $\eus S_n$ into $\eus S_{n+1}$ as the subgroup  preserving  the last element $n$.
Let $\tilde\pi$ denote $\pi$  considered as element of $\eus S_{n+1}$.
Then $\me=\boldsymbol{x}(\tilde\pi)$.

\textbullet \ \ $k_0=0$.   Choose any binomial $x_ix_j$ in $\me$ and replace it with 
$(x_i\dotplus x_j)x_0=x_kx_0$ (i.e., $x_i\dotplus x_j=x_k$).  That is,
$\me=\me''x_ix_j$ is replaced with $\me''x_kx_0=:\me' x_0$. By the previous argument,
we can find $\pi\in \eus S_{n}$  such that  $\me'=\boldsymbol{x}(\pi)$
and $\me' x_0=\boldsymbol{x}(\tilde\pi)$.
Since $x_k=x_i\dotplus x_j$ occurs in $\boldsymbol{x}(\pi)$, we can apply 
Proposition~\ref{prop:sdvig}  and assume that $\pi(i)=j$ and hence $\tilde\pi(i)=j$. 
Finally, we replace $\tilde\pi$ with $\tilde\pi \tau$, where the transposition
$\tau\in \eus S_{n+1}$ permutes $i$ and $n$. One readily verifies that $\boldsymbol{x}(\tilde\pi\tau)=\me''x_ix_j=\me$.
\end{proof}

\begin{cl}
The number of different monomials in\/ $\per(\widetilde{\eus M}_G)$ equals\/ 
$\dim (\cs^{n+1}\eus R)^G$.
\end{cl}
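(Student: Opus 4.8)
The plan is to combine Theorem~\ref{thm:per-ext-keli} with the same dictionary between monomials in $\cs^{\cdot}\eus R$ and weights in $\hat G$ that was used in the proof of Theorem~\ref{thm:number-in-perm}. First I would observe that Theorem~\ref{thm:per-ext-keli} characterises the distinct monomials in $\per(\widetilde{\eus M}_G)$ intrinsically: a monomial $\me=\prod_{i=0}^{n-1}x_i^{k_i}$ occurs if and only if $\sum_i k_i=n+1$ and $k_0x_0\dotplus\dots\dotplus k_{n-1}x_{n-1}=0\in G$. So the number of distinct monomials is exactly the number of integer vectors $(k_0,\dots,k_{n-1})$ with non-negative entries satisfying these two conditions.

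Next I would translate this count into invariant theory. Fix a basis $(z_0,\dots,z_{n-1})$ of $\eus R$ consisting of $G$-eigenvectors, with $z_i$ of weight $\chi_i$, so that $\hat G=\{\chi_0,\dots,\chi_{n-1}\}$. Then the monomial $z_0^{k_0}\dots z_{n-1}^{k_{n-1}}\in\cs^{n+1}\eus R$ is a semi-invariant of weight $k_0\chi_0\dotplus\dots\dotplus k_{n-1}\chi_{n-1}$, and such monomials form a basis of $\cs^{n+1}\eus R$ consisting of weight vectors. Hence
\[
\dim(\cs^{n+1}\eus R)^G=\#\{(k_0,\dots,k_{n-1})\in\BN^n \mid \textstyle\sum_i k_i=n+1 \ \ \&\ \ k_0\chi_0\dotplus\dots\dotplus k_{n-1}\chi_{n-1}=\hat 0\}.
\]
Comparing with the description above, these two sets of conditions are identified once one replaces $G$ by $\hat G$: the roles of $x_i\leftrightarrow\chi_i$ and $0\in G\leftrightarrow\hat 0\in\hat G$ match term by term. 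Since $G$ and $\hat G$ are (non-canonically) isomorphic as abelian groups, the two counts coincide, which gives $p'(G):=\#\{\text{monomials in }\per(\widetilde{\eus M}_G)\}=\dim(\cs^{n+1}\eus R)^G$.

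This is really just a corollary, so there is no serious obstacle: all the content is in Theorem~\ref{thm:per-ext-keli}, whose hard direction (sufficiency, producing the permutation $\tilde\pi$) has already been established using Proposition~\ref{prop:sdvig} and the embedding $\eus S_n\hookrightarrow\eus S_{n+1}$. The only point to handle carefully is that the first displayed equation counts vectors $(k_0,\dots,k_{n-1})$ indexed by the $n$ group elements $x_0,\dots,x_{n-1}$ (not $n+1$ of them), even though $\widetilde{\eus M}_G$ has order $n+1$; this is already built into the statement of Theorem~\ref{thm:per-ext-keli}, since the repeated neutral element $x_n=x_0$ is absorbed into the exponent $k_0$. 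With that noted, the identification of the two counting problems via $G\simeq\hat G$ finishes the proof exactly as in Theorem~\ref{thm:number-in-perm}.
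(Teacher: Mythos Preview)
Your proof is correct and follows exactly the approach the paper indicates: it mimics the proof of Theorem~\ref{thm:number-in-perm}, replacing $n$ by $n+1$ and invoking Theorem~\ref{thm:per-ext-keli} in place of Theorem~\ref{thm:hall-perm}, then identifying the two counts via $G\simeq\hat G$. The paper itself omits the details and simply says the argument is almost identical to that of Theorem~\ref{thm:number-in-perm}.
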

The  proof is almost identical to that of Theorem~\ref{thm:number-in-perm} and left to the reader.

It follows from Frobenius' theory of group determinants (see e.g. \cite[\S\,2]{johnson}) that, 
for abelian groups, $\det(\eus M_{G})$ is the product of linear forms in $x_i$'s. 
In case of generic circulant matrices, this fact
plays an important role in  \cite{lehm} and \cite{thomas}.
For future use, we provide a quick derivation.
Recall that $G=\{x_0,x_1,\dots,x_{n-1}\}$ and $\hat G=\{\chi_0,\chi_1,\dots,\chi_{n-1}\}$.
Consider the $n$ by $n$ complex matrix $\eus K_G$, with $(\eus K_G)_{i,j}=(\chi_j(x_i))$, 
and the vectors $v_j=\sum_{i=0}^{n-1}\chi_j(x_i)x_i\in \eus R$, $j=0,1,\dots,n-1$. 

\begin{prop}   \label{prop:lin-forms-det} 
Under the above notation, we have:
\begin{enumerate}
\item  $v_j$ is an eigenvector of $G$ corresponding to the weight $\chi_j^{-1}$;
\item   $\det(\eus M_G){\cdot} \det(\eus K_G)=\det(\ov{\eus K_G})v_0v_1\dots v_{n-1}$,
where `bar' stands for the complex conjugation;
\item  $\det(\ov{\eus K_G})/ \det(\eus K_G)$ equals the sign of the permutation 
$\pi_0\in\eus S_n$
that takes each $x_i$ to its inverse. Hence $\det(\eus M_G)=\text{\rm sign}(\pi_0)
v_0v_1\dots v_{n-1}$.
\end{enumerate}
\end{prop}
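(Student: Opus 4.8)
The plan is to verify the three assertions in order, as each feeds the next. For (1), I would compute directly: for $\gamma\in G$, write $\gamma\dotplus x_i=x_{\sigma_\gamma(i)}$ using the permutation $\sigma_\gamma$ introduced in Proposition~\ref{prop:nat-action}. Then
\[
  \gamma{\cdot}v_j=\sum_{i=0}^{n-1}\chi_j(x_i)(\gamma\dotplus x_i)=\sum_{i=0}^{n-1}\chi_j(x_i)x_{\sigma_\gamma(i)}=\sum_{k=0}^{n-1}\chi_j(x_{\sigma_\gamma^{-1}(k)})x_k .
\]
Since $x_{\sigma_\gamma^{-1}(k)}=x_k\circleddash\gamma$ (as $x_{\sigma_\gamma(i)}=x_i\dotplus\gamma$), we have $\chi_j(x_{\sigma_\gamma^{-1}(k)})=\chi_j(x_k)\chi_j(\gamma)^{-1}$, so $\gamma{\cdot}v_j=\chi_j(\gamma)^{-1}v_j$, which is the claim about the weight $\chi_j^{-1}$.

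For (2), the key observation is that $\eus K_G=(\chi_j(x_i))_{i,j}$ is, up to the reindexing $i\leftrightarrow x_i$, the character table of the abelian group $G$, hence invertible (its rows are the distinct characters, which are linearly independent). I would expand the product $v_0v_1\cdots v_{n-1}\in\cs^n\eus R$ using the definition of each $v_j$: it is a sum over functions, but the cleanest route is to recognise that the $n$-tuple $(v_0,\dots,v_{n-1})$ is obtained from $(x_0,\dots,x_{n-1})$ by applying the linear transformation with matrix $\eus K_G^{\mathsf T}$ (or $\eus K_G$, depending on index conventions). Therefore $v_0v_1\cdots v_{n-1}$, viewed as the evaluation of a suitable determinant-like object, satisfies the standard change-of-basis identity for products of linear forms: if $w_j=\sum_i a_{ij}y_i$ then $\prod_j w_j$ and $\prod_i y_i$ are related via the permanent/determinant of $(a_{ij})$ only after symmetrisation — so instead I would argue as follows. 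Consider the matrix $\eus M_G\eus K_G$: its $(i,j)$ entry is $\sum_k (x_i\dotplus x_k)\chi_j(x_k)$. Reindex $k$ by $x_k=x_\ell\circleddash x_i$, i.e. $x_i\dotplus x_k=x_\ell$; then $\chi_j(x_k)=\chi_j(x_\ell)\chi_j(x_i)^{-1}=\overline{\chi_j(x_i)}\chi_j(x_\ell)$, giving $(\eus M_G\eus K_G)_{i,j}=\overline{\chi_j(x_i)}\sum_\ell x_\ell\chi_j(x_\ell)=\overline{(\eus K_G)_{i,j}}\,v_j$. Thus $\eus M_G\eus K_G=\ov{\eus K_G}\,\mathrm{diag}(v_0,\dots,v_{n-1})$, and taking determinants yields $\det(\eus M_G)\det(\eus K_G)=\det(\ov{\eus K_G})\,v_0v_1\cdots v_{n-1}$, which is (2). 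This reindexing identity $\eus M_G\eus K_G=\ov{\eus K_G}\,\mathrm{diag}(v_j)$ is the crux of the whole proposition and the step I expect to be the main obstacle — getting the conjugation and the index bookkeeping exactly right.

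For (3), note $\det(\ov{\eus K_G})$ is obtained from $\det(\eus K_G)$ by replacing each $\chi_j$ with $\overline{\chi_j}=\chi_j^{-1}$; since $\hat G\to\hat G$, $\chi\mapsto\chi^{-1}$ is a bijection and since $\chi_j(x_i)^{-1}=\chi_j(\ominus x_i)$, passing to conjugates amounts to permuting the rows of $\eus K_G$ by $\pi_0$ (the permutation $x_i\mapsto\ominus x_i$). Hence $\det(\ov{\eus K_G})=\mathrm{sign}(\pi_0)\det(\eus K_G)$, so the ratio is $\mathrm{sign}(\pi_0)$, and substituting into (2) gives $\det(\eus M_G)=\mathrm{sign}(\pi_0)\,v_0v_1\cdots v_{n-1}$. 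Combined with (1), this exhibits $\det(\eus M_G)$ as a scalar multiple of a product of $G$-eigenvectors of weights $\chi_0^{-1},\dots,\chi_{n-1}^{-1}$, whose total weight is $\sum_{j}\chi_j^{-1}=\sum_{\chi\in\hat G}\chi$ (the sum over all elements of $\hat G$), recovering the assertion from the introduction about the weight of $\det(\eus M_G)$.
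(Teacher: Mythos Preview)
Your argument is correct. Parts (1) and (2) follow exactly the paper's route: the paper records only that $(\eus M_G{\cdot}\eus K_G)_{ij}=\chi_j(x_i)^{-1}v_j=(\ov{\eus K_G})_{i,j} v_j$ and declares (1) ``obvious'', which is precisely your reindexing $x_\ell=x_i\dotplus x_k$ followed by taking determinants.

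For (3) your argument differs from the paper's. You observe directly that $\ov{(\eus K_G)_{i,j}}=\chi_j(x_i)^{-1}=\chi_j(x_{\pi_0(i)})=(\eus K_G)_{\pi_0(i),j}$, so $\ov{\eus K_G}$ is a row permutation of $\eus K_G$ by $\pi_0$, giving the sign at once. The paper instead feeds back into the identity of (2): assuming $x_0$ is the neutral element, it compares the coefficient of $x_0^n$ on both sides. On the right, each $v_j$ contributes $\chi_j(x_0)=1$, so the coefficient is $\det(\ov{\eus K_G})$; on the left, the only permutation $\pi$ with $x_i\dotplus x_{\pi(i)}=x_0$ for all $i$ is $\pi=\pi_0$, so the coefficient is $\mathrm{sign}(\pi_0)\det(\eus K_G)$. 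Both arguments are short; yours is self-contained, while the paper's is a pleasant reuse of (2). (One small clean-up: the aside about $\chi\mapsto\chi^{-1}$ being a bijection of $\hat G$ is not needed for your row-permutation argument and could be dropped.)
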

\begin{proof}
(1)  Obvious.

(2) \ It is easily seen that $(\eus M_G{\cdot}\eus K_G)_{ij}=\chi_j(x_i)^{-1}v_j=
\ov{\chi_j(x_i)}v_j=(\ov{\eus K_G})_{i,j} v_j$.

(3) \ Assuming that $x_0$ is the neutral element, 
compare the coefficient of $x_0^n$ in both parts of the equality in (2). 
\end{proof}

Note that $\widetilde{\eus M}_G$ has  equal columns and hence
$\det(\widetilde{\eus M}_G)=0$.

\begin{rmk}
1. The set of vectors $\{v_j\}$ is closed with respect to complex conjugation, and letting
$z_j=\ov{v_j}=\sum_i \ov{\chi_j(x_i)} x_i$ one obtains the eigenvector corresponding to 
$\chi_j$.

2. The orthogonality relations for the characters imply that 
$\eus K_G (\ov{\eus K_G})^t= n \odin_n$; that is, $\frac{1}{\sqrt n}\eus K_G$ is unitary and $|\det(\eus K_G)|^2=n^n$.
\end{rmk}
For the sake of completeness, we mention some other easy properties.

\begin{prop}    \label{prop:izi-prop}
Suppose $\gamma\in G$ and $\pi\in \eus S_n$.
\begin{enumerate}
\item $\gamma{\cdot}\boldsymbol{x}(\pi)=\boldsymbol{x}(\pi\sigma_\gamma^{-1})$, where 
`$\cdot$' stands for the natural $G$-action on $\cs^{n}\eus R$; 
\item $\boldsymbol{x}(\pi)=\boldsymbol{x}(\pi^{-1})$;
\item $\per(\eus M_G)\in (\cs^n\eus R)^G$;
\item If $\hat G$ has a unique element of order 2, say $\psi$, then $\det(\eus M_G)$
is a semi-invariant of weight $\psi$. In all other cases, $\det(\eus M_G)\in (\cs^n\eus R)^G$.
\end{enumerate}
\end{prop}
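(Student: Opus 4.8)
The plan is to prove the four items in order, using the explicit formula $\boldsymbol{x}(\pi)=\prod_{i=0}^{n-1}(x_i\dotplus x_{\pi(i)})$ together with the bookkeeping permutation $\sigma_\gamma$ introduced in Proposition~\ref{prop:nat-action} (recall $x_{\sigma_\gamma(i)}=x_i\dotplus\gamma$). For (1), the natural $G$-action on $\cs^{\cdot}\eus R$ sends the linear form $x_j$ (dual coordinate on $\eus R$) to $x_{\sigma_\gamma^{-1}(j)}$, or rather $\gamma$ acts on $x_j\in\cs^1\eus R$ by the rule that recovers $x_i\dotplus\gamma$; so I would simply compute $\gamma\cdot\boldsymbol{x}(\pi)=\prod_i(x_i\dotplus x_{\pi(i)}\dotplus\gamma)=\prod_i(x_i\dotplus x_{\pi\sigma_\gamma^{-1}(i)})$ after reindexing, which is $\boldsymbol{x}(\pi\sigma_\gamma^{-1})$. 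For (2), observe that since $G$ is abelian each linear factor $x_i\dotplus x_{\pi(i)}$ is symmetric, and the factor indexed by $i$ in $\boldsymbol{x}(\pi)$ equals the factor indexed by $\pi(i)$ in $\boldsymbol{x}(\pi^{-1})$ (namely $x_{\pi^{-1}(\pi(i))}\dotplus x_{\pi(i)}=x_i\dotplus x_{\pi(i)}$); so the two products agree after relabeling.

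For (3), I would combine (1) and (2): $\gamma\cdot\per(\eus M_G)=\sum_{\pi}\gamma\cdot\boldsymbol{x}(\pi)=\sum_\pi\boldsymbol{x}(\pi\sigma_\gamma^{-1})=\sum_{\pi'}\boldsymbol{x}(\pi')=\per(\eus M_G)$, since $\pi\mapsto\pi\sigma_\gamma^{-1}$ is a bijection of $\eus S_n$ (and permanents carry no sign). Alternatively this follows directly from Theorem~\ref{thm:number-in-perm} / Theorem~\ref{thm:hall-perm}: every monomial of $\per(\eus M_G)$ is a semi-invariant of weight $k_0\chi_0\dotplus\dots\dotplus k_{n-1}\chi_{n-1}=\hat 0$, i.e.\ $G$-invariant, but I prefer the one-line argument via the $\sigma_\gamma$-translation since it is self-contained.

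The substantive item is (4), and it is where I expect the main obstacle. Here I would invoke Proposition~\ref{prop:lin-forms-det}: $\det(\eus M_G)=\text{sign}(\pi_0)\,v_0v_1\cdots v_{n-1}$, where $v_j$ is an eigenvector of weight $\chi_j^{-1}$ (part (1) of that proposition). Hence $\det(\eus M_G)$ is a semi-invariant of weight $\sum_{j}\chi_j^{-1}=-\sum_j\chi_j\in\hat G$ (additive notation on the dual group), which equals $\sum_{\psi\in\hat G}\psi$ since inversion permutes $\hat G$. So the weight of $\det(\eus M_G)$ is the sum of all elements of $\hat G$. Now in any finite abelian group $H$, the sum of all elements equals $0$ unless $H$ has exactly one element of order $2$, in which case the sum is that element: indeed, pairing each $h$ with $-h$ cancels everything except the elements with $2h=0$, and the $2$-torsion subgroup is either trivial, or $\cong\BZ/2$, or has $\ge 3$ involutions whose sum is again $0$ (the non-zero elements of an $\mathbb{F}_2$-vector space of dimension $\ge 2$ sum to zero). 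Applying this to $H=\hat G$: if $\hat G$ has a unique element $\psi$ of order $2$ then the weight is $\psi$, and otherwise the weight is $\hat 0$, i.e.\ $\det(\eus M_G)$ is invariant. The only delicate point is getting the sign/inversion bookkeeping right when passing between weights of $v_j$ and weights of $\hat G$, but since inversion is an involution of $\hat G$ the set $\{\chi_j^{-1}\}$ is just $\hat G$ again, so the weight sum is unchanged. I would also remark that $\text{sign}(\pi_0)$ is a scalar and does not affect the weight.
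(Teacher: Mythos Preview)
Your proposal is correct and follows essentially the same route as the paper: a direct reindexing computation for (1), the symmetry of the factors for (2), summing (1) over $\eus S_n$ for (3), and for (4) invoking Proposition~\ref{prop:lin-forms-det} to identify the weight of $\det(\eus M_G)$ with $\sum_{\psi\in\hat G}\psi$ and then using the standard fact about the sum of all elements in a finite abelian group. The only cosmetic slip is that in (3) you announce you will ``combine (1) and (2)'' but in fact only (1) is needed (and used), exactly as in the paper.
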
\begin{proof}
1)   $\gamma{\cdot}\boldsymbol{x}(\pi)=\gamma{\cdot}\prod_{i=0}^{n-1}(x_i\dotplus x_{\pi(i)})=
\prod_{i=0}^{n-1}(x_i\dotplus x_{\pi(i)}\dotplus \gamma)=
\prod_{i=0}^{n-1}(x_{\sigma_\gamma(i)}\dotplus x_{\pi(i)})=
\boldsymbol{x}(\pi\sigma_\gamma^{-1})$.

2)  Obvious.

3)  follows from  (1).

4)  Proposition~\ref{prop:lin-forms-det} shows that $\det(\eus M_G)$ is a semi-invariant whose weight
equals the sum of all elements of $\hat G$.
The sum of all elements of an abelian group is known to be the neutral element unless 
the group  has a unique element of order 2, in which case the sum is this unique element.
\end{proof}

Note that $\per(\widetilde{\eus M}_G)$ is an element of $\cs^{n+1}\eus R$, but it 
does not belong to $(\cs^{n+1}\eus R)^G$.

\section{Some open problems}
\label{sect:questions}

\noindent
Associated with previous results on $\per(\eus M_G)$, 
there are some interesting problems. 
Let $d(G)$ denote the number of different monomials in $\det(\eus M_G)$.
In view of possible cancellations, we have $d(G)\le p(G)$. 
Using the factorisation of $\det(\eus M_{\gc_n})$ and  theory of symmetric functions,
Thomas~\cite{thomas} proved
that  $d(\gc_n)=p(\gc_n)$ whenever $n$ is a prime power.
He also computed these values up to $n=12$ (e.g. $d(\gc_6)=68<80 =p(\gc_6)$)
and suggested that the converse could be true. 

\begin{qtn}   \label{vopr:1}
What are necessary/sufficient conditions on a finite abelian group $G$ for the equality 
$d(G)=p(G)$? Specifically, is it still true that the condition `$\#(G)$ is a prime power' is sufficient?
\end{qtn}

The equality $\det(\eus M_G)=\text{\rm sign}(\pi_0)
v_0v_1\dots v_{n-1}$  might be helpful in resolving 
Problem~\ref{vopr:1}. The following problem is more general and vague.

\begin{qtn}
Let $\me$ be a monomial that satisfies conditions of Theorem~\ref{thm:hall-perm}. Is there a group-theoretic 
(or invariant-theoretic) interpretation
of the coefficient of\/ $\me$ in $\per(\eus M_G)$ or $\det(\eus M_G)$?
\end{qtn}

\noindent 
For $G=\gc_2\oplus\gc_2$, we have $p(G)=d(G)=11$.
Because $p(\gc_4)=d(\gc_4)=10$, one may  speculate that 
$p(G)\ge p(\gc_n)$ and $d(G)\ge d(\gc_n)$ if $\#(G)=n$.
By Theorem~\ref{thm:number-in-perm}, $p(G)$ is the coefficient
of $t^{n}$ in the Poincar\'e series  $\cf( (\cs^{\cdot}\eus R)^G;t)$, and one can consider a 
related 

\begin{qtn}
Is it true that \ $[t^m]\cf( (\cs^{\cdot}\eus R)^G;t)\ge [t^m]\cf( (\cs^{\cdot}\eus R)^{\gc_n};t)$
\ for any $m\in \BN$ ?
\end{qtn}

Given $G=\{x_0,\dots,x_{n-1}\}$, a family of matrices 
$\eus M_{G,l}\in {\rm Mat}_l(\BC[x_0,\dots,x_{n-1}])$, $l\ge n$, 
is said to be {\it admissible}, if  $\eus M_{G,n}=\eus M_G$,
$\eus M_{G,l}$ is a principal submatrix of $\eus M_{G,l+1}$, and the number of different monomials in $\per(\eus M_{G,l})$ equals $\dim (\cs^l\eus R)^G$.

\begin{qtn} For what  $G$, does an admissible family exist?
\end{qtn}
So far, we only have matrices $\eus M_{G,l}$ for $l=n, n+1$. 
It is possible to jump up to $l=2n$ by letting
$\eus M_{G,2n}=\begin{pmatrix}  \eus M_G & \eus M_G \\ \eus M_G & \eus M_G 
\end{pmatrix}$. It is the addition table for two consecutive sets of group elements, and it 
can be proved that $\per(\eus M_{G,2n})$ has the required property. Then, similarly to the construction of the extended Cayley table, one defines a larger matrix $\eus M_{G,2n+1}$. This procedure can be iterated, so one obtains a suitable collection of matrices of orders $kn, kn+1$, $k\in\BN$. However, it is not clear whether it is possible to define  matrices $\eus M_{G,l}$ for all other $l$. Maybe the reason is that, for arbitrary abelian $G$, there is no natural ordering of its elements. But, for a cyclic group, one does have a natural ordering, and we provide a conjectural definition of an admissible family of matrices.

For $G=\gc_n$, it will be convenient to begin with the circulant matrix in the Toeplitz form, see Example~\ref{ex:circulant}. That is to say,  our initial matrix is 
$\hat{\eus M}_{\gc_n}=(\hat m_{i,j})$, where  $\hat m_{i,j}=x_{i-j}$, $i,j=0,1,\dots,n-1$,
and the subscripts of $x$'s are interpreted $\pmod n$.
For any $l\ge n$,  we then define the entries of $\hat{\eus M}_{\gc_n, l}$ by the same formula, only the range of $i,j$ is extended. In particular, $\hat{\eus M}_{\gc_n, l}$ is a Toeplitz matrix for any $l$.
\begin{exa}
$\hat{\eus M}_{\gc_3,5}=\begin{pmatrix}  x_0 & x_1 & x_2 & x_0 & x_1\\
x_2 & x_0 & x_1 & x_2 & x_0  \\x_1 & x_2 & x_0 & x_1 & x_2\\ x_0 & x_1 & x_2 & x_0
& x_1 \\ x_2 & x_0 & x_1 & x_2 & x_0
\end{pmatrix}$
\end{exa}
\begin{conj}   \label{conj:quasi-circ}
For $l\ge n$, the monomial  $x_0^{\lb_0}x_1^{\lb_1}\dots x_{n-1}^{\lb_{n-1}}$ occurs in\/ 
$\per(\hat{\eus M}_{\gc_n,l})$ 
if and only if  \\
\hbox to \textwidth{ \ $(\ast)$ 
\hfil 
$\lb_0+\dots +\lb_{n-1}=l   \quad \text{ and } \quad 
 \displaystyle \sum_{j=1}^{n-1} j\lb_j \equiv 0 \mod n$. \hfil } 
In particular, 
the number of different monomials in $\per(\hat{\eus M}_{\gc_n,l})$ equals 
$a_0(\gc_n,l)$.
\end{conj}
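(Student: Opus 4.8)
\textbf{Proof proposal for Conjecture~\ref{conj:quasi-circ}.}
The plan is to reduce the statement to Theorem~\ref{thm:hall-perm} (Hall's criterion), which already handles the case $l=n$, and to push the argument up to arbitrary $l\ge n$ by an induction on $l$ that mimics the passage from $\eus M_G$ to $\widetilde{\eus M}_G$ carried out in Theorem~\ref{thm:per-ext-keli}. The necessity of $(\ast)$ is the easy direction and should go through verbatim as in the ``$\Rightarrow$'' part of Theorem~\ref{thm:per-ext-keli}: if $x_0^{\lb_0}\dots x_{n-1}^{\lb_{n-1}}=\prod_{i} x_{i-\tilde\pi(i)}$ for some $\tilde\pi\in\eus S_l$ (indices mod $n$), then summing the subscripts $j$ of each $x_j$ weighted by $\lb_j$ gives $\sum_j j\lb_j\equiv\sum_i(i-\tilde\pi(i))\equiv 0\pmod n$, because $\tilde\pi$ is a bijection of $\{0,\dots,l-1\}$ and therefore permutes residues mod $n$ in a way that is a product of transpositions-plus-shifts whose total displacement vanishes mod $n$; and of course $\sum_j\lb_j=l$ by counting linear factors. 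One should be slightly careful here: when $n\nmid l$ the multiset of residues of $\{0,\dots,l-1\}$ is not balanced, so the cleanest phrasing is $\sum_i(i-\tilde\pi(i))=0$ in $\BZ$, hence $\equiv 0\pmod n$, which is exactly $(\ast)$.

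For sufficiency I would argue by induction on $l$, the base case $l=n$ being Theorem~\ref{thm:hall-perm} (applied to $G=\gc_n$, noting that $\hat{\eus M}_{\gc_n}$ differs from $\eus M_{\gc_n}$ only by a column permutation, by the Remark after Example~\ref{ex:circulant}, so $\per$ is unchanged and the index condition $\sum_j j\lb_j\equiv 0$ is the $\chi_0$-weight condition). Suppose $l>n$ and $(\lb_0,\dots,\lb_{n-1})$ satisfies $(\ast)$ with $\sum\lb_j=l$. Since $l>n\ge 1$, some $\lb_k>0$; I want to peel off one factor $x_k$ and land on a monomial of degree $l-1$ satisfying $(\ast)$ for the matrix $\hat{\eus M}_{\gc_n,l-1}$. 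Decreasing $\lb_k$ by one changes $\sum_j j\lb_j$ by $-k$, so the reduced monomial will generally \emph{not} satisfy $(\ast)$ unless $k=0$. The fix is the same trick used in the ``$k_0=0$'' bullet of Theorem~\ref{thm:per-ext-keli}: if $\lb_0=0$, pick any two indices $i,j$ (with $\lb_i,\lb_j>0$, or $\lb_i\ge 2$, $i=j$) occurring in $\me$, and rewrite the pair $x_ix_j$ as $x_{i+j}x_0$ modulo $n$; this preserves both the degree and $\sum_j j\lb_j \pmod n$ (since $i+j\equiv i+j$), so after finitely many such moves we may assume $\lb_0>0$. Then set $\me'=x_0^{\lb_0-1}x_1^{\lb_1}\dots x_{n-1}^{\lb_{n-1}}$; it has degree $l-1\ge n$ and satisfies $(\ast)$, so by induction $\me'=\boldsymbol{x}(\pi)$ for some $\pi\in\eus S_{l-1}$, viewed inside $\eus S_l$ as fixing the last index $l-1$; the extra factor $x_0=x_{(l-1)-\pi(l-1)}$ is produced automatically, giving $\me=\boldsymbol{x}(\pi)$ with $\pi\in\eus S_l$. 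To make the ``rewrite $x_ix_j$ as $x_{i+j}x_0$'' step legitimate at the level of permutations (not just monomials) one needs an analogue of Proposition~\ref{prop:sdvig} for $\hat{\eus M}_{\gc_n,l}$: namely, if $x_k$ occurs in a monomial $\me=\boldsymbol{x}(\pi)$ of $\per(\hat{\eus M}_{\gc_n,l})$ and $x_k=x_{i-j}$ (mod $n$) for prescribed $i,j$, then $\me=\boldsymbol{x}(\sigma)$ for some $\sigma$ with $\sigma(i)=j$. This follows from a shift action: the cyclic shift $x_a\mapsto x_{a+1}$ on subscripts (read mod $n$) induces, for the Toeplitz matrix $\hat{\eus M}_{\gc_n,l}$, an operation on $\eus S_l$ preserving $\boldsymbol{x}(\cdot)$, exactly as the $\ast$-action does in Proposition~\ref{prop:nat-action}; conjugating by a suitable power lets one move the chosen factor into position $(i,j)$ without changing the monomial. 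Then one conjugates/composes with a transposition moving the freed index to the last slot $l-1$, just as in the final line of Theorem~\ref{thm:per-ext-keli}.

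The last sentence of the conjecture, that the number of distinct monomials equals $a_0(\gc_n,l)$, is then immediate: by the characterisation just proved, the distinct monomials are in bijection with the $(\lb_0,\dots,\lb_{n-1})\in\BN^n$ satisfying $(\ast)$, and that is precisely the count defining $a_0(\gc_n,l)$ (equivalently $\dim(\cs^l\eus R)^{\gc_n}$, by the discussion in Section~\ref{subs:el-ji}), valid for all $l\ge 0$ by the Remark after \eqref{f-ela-jib}. I expect the genuine obstacle to be the analogue of Proposition~\ref{prop:sdvig} for the non-square-friendly Toeplitz matrices $\hat{\eus M}_{\gc_n,l}$ with $n\nmid l$: one has to check that the shift action on $\eus S_l$ really is well-defined and monomial-preserving when the ``wrap-around'' block of $\hat{\eus M}_{\gc_n,l}$ is only partial, i.e., that the bookkeeping of subscripts mod $n$ against genuine positions in $\{0,\dots,l-1\}$ stays consistent. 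Everything else is a careful but routine transcription of the $l=n\to n+1$ argument already in the paper.
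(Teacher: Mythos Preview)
The paper does \emph{not} prove this statement: it is explicitly labelled a conjecture, and the only remark following it is that necessity of $(\ast)$ is easy and that the case $n=2$ can be checked. So there is no proof in the paper to compare against; the question is whether your proposal actually settles it.

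Your necessity argument is correct and is what the author calls ``not hard'': $\sum_{i=0}^{l-1}(i-\tilde\pi(i))=0$ in $\BZ$, whence $\sum_j j\lb_j\equiv 0\pmod n$, and $\sum_j\lb_j=l$ by degree count.

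For sufficiency, the induction and the case $\lb_0>0$ are fine, and reducing the $\lb_0=0$ case to an analogue of Proposition~\ref{prop:sdvig} is exactly the template of Theorem~\ref{thm:per-ext-keli}. But the obstacle you flag at the end is genuine, not ``routine bookkeeping'', and it is precisely why the statement remains a conjecture. The monomial-preserving shift you invoke does not exist for $\hat{\eus M}_{\gc_n,l}$ when $n\nmid l$: if $\sigma$ is the cyclic shift of $\{0,\dots,l-1\}$, the factor of $\boldsymbol{x}(\sigma\pi\sigma^{-1})$ attached to $j$ is $x_{\sigma(j)-\sigma(\pi(j))\bmod n}$, which equals $x_{j-\pi(j)}$ generically but picks up an error of $\pm l\pmod n$ whenever $j$ or $\pi(j)$ equals $l-1$. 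With no transitive monomial-preserving action on positions there is no analogue of Proposition~\ref{prop:sdvig}, and your ``undo the rewrite'' step collapses: after obtaining $\pi\in\eus S_{l-1}$ with $\boldsymbol{x}(\pi)=\me'$, the transposition with the last slot produces the factors $x_i,x_j$ only if the $x_{i+j}$-factor of $\me'$ sits at a row $a$ with $a\equiv i+(l-1)\pmod n$, and nothing in your argument forces this. In effect your scheme goes through exactly when the shift action is available at the previous stage, which recovers only the cases $l=kn,\,kn+1$ already singled out in the discussion preceding the conjecture; it does not yield a proof for general $l$.
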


It is not hard to verify  the necessity of $(\ast)$  and that the conjecture is true for $n=2$.


\begin{thebibliography}{33}

\bibitem{alm82}
{\sc G.~Almkvist}.
Some formulas in invariant theory, 
{\it J. Algebra} {\bf 77}, no.\,2 (1982),  338--359. 

\bibitem{AF76}
{\sc G.~Almkvist} and {\sc R.~Fossum}. 
Decomposition of exterior and symmetric powers of indecomposable $\BZ/p\BZ$-modules 
in characteristic $p$ and relations to invariants. {\it S\'eminaire d'Alg\`ebre P. Dubreil 
(Paris 1976--77)}, pp. 1--111, Lecture Notes in Math. {\bf 641}, 
Springer-Verlag, Berlin, 1978. 

\bibitem{bru-new}
{\sc R.A.~Brualdi} and {\sc M.~Newman}. 
An enumeration problem for a congruence equation. 
{\it J. Res. Nat. Bur. Standards, Sect. B} {\bf 74B}\,(1970),  37--40.

\bibitem{elji-1} 
{\sc A.G.~Elashvili} and {\sc M.~Jibladze}.
Hermite reciprocity for the regular representations of cyclic groups, 
{\it Indag. Math.} {\bf 9}, no.\,2 (1998), 233--238.

\bibitem{elji-2} 
{\sc A.G.~Elashvili} and {\sc M.~Jibladze}. 
``Hermite reciprocity'' for  semi-invariants in the regular representations of cyclic groups,
{\it Proc. Razmadze Math. Inst.} (Tbilisi),  Vol.~{\bf 119}\,(1999),  21--24.

\bibitem{EJP} 
{\sc A.G.~Elashvili, M.~Jibladze}, and {\sc D.~Pataraia}. 
Combinatorics of necklaces and Hermite reciprocity,
{\it J. Alg. Combinatorics}, {\bf 10} (1999), 173--188.

\bibitem{fred}
{\sc M.~Fredman}. A symmetry relationship for a class of partitions, 
{\it J. Combin. Theory, Ser. A\/} {\bf 18}(1975), 199--202.

\bibitem{hall}
{\sc M.~Hall}.
A combinatorial problem on abelian groups. 
{\it Proc. Amer. Math. Soc.} {\bf 3}(1952), 584--587. 

\bibitem{ha-wr}  
{\sc G.H.~Hardy} and {\sc E.M.~Wright}. ``An introduction to the theory of numbers''. Fifth edition. The Clarendon Press, Oxford University Press, New York, 1979. xvi+426 pp.

\bibitem{johnson}
{\sc K.W.~Johnson}. On the group determinant, 
{\it Math. Proc. Camb. Phil. Soc.} {\bf 109} (1991), 299--311.

\bibitem{lehm}
{\sc D.~Lehmer}.  Some properties of circulants, 
{\it J. Number Theory}  {\bf 5}(1973), 43--54.

\bibitem{st79}
{\sc R.P.~Stanley}.
Invariants of finite groups and their applications to combinatorics, 
{\it  Bull. Amer. Math. Soc.} (N.S.) {\bf 1}, no.\,3 (1979), 475--511. 

\bibitem{thomas}
{\sc H.~Thomas}.
The number of terms in the permanent and the determinant of a generic circulant matrix,
{\it J. Algebraic Combin.}, {\bf 20} (2004), 55--60.

\end{thebibliography}
\end{document}